\setlist[enumerate]{leftmargin=.5in}
\setlist[itemize]{leftmargin=.5in}
\crefname{hypothesis}{Hypothesis}{Hypotheses}
\title{Asymptotic Preserving Linearly Implicit Additive IMEX-RK Finite Volume Schemes for Low Mach Number Isentropic Euler Equations}
\author{S. Samantaray
  \thanks{Department of Mathematics, IIT Madras, Chennai-600036,
    India, \email{sauravsray@iitm.ac.in}. }
}
\newcommand*{\addFileDependency}[1]{
  \typeout{(#1)}
  \@addtofilelist{#1}
  \IfFileExists{#1}{}{\typeout{No file #1.}}
}
\newcommand{\mbb}{\mathbb}
\newcommand{\mcal}{\mathcal}
\newcommand{\mbR}{\mbb{R}}
\newcommand{\Dlt}{\Delta t}
\newcommand{\Dt}{\partial_t}
\newcommand{\grd}{\mathrm{\nabla}}
\def\bold#1{\mbox{\boldmath $#1$}}
\newcommand{\uu}[1]{\bold{#1}}
\newcommand{\mbu}{\uu{u}}
\newcommand{\mbq}{\uu{q}}
\newcommand{\ux}{\uu{x}}
\newcommand{\dvg}{\nabla \cdot}
\newcommand{\veps}{\varepsilon}
\newcommand{\norm}[1]{\lVert#1\rVert}
\newcommand{\Norm}[1]{{\left\vert\kern-0.25ex\left\vert\kern-0.25ex\left\vert #1 
    \right\vert\kern-0.25ex\right\vert\kern-0.25ex\right\vert}}
\begin{document}

\date{}

\maketitle

\begin{abstract}
We consider the compressible Euler equations of gas dynamics with isentropic equation of state. Standard numerical schemes for the Euler equations suffer from stability and accuracy issues in the low Mach regime. These failures are attributed to the transitional behaviour of the governing equations from compressible to incompressible solution in the limit of vanishing Mach number. In this paper we introduce an extra flux term to the momentum flux. This extra term is recognised by looking at the constraints of the incompressible limit system. As a consequence the flux terms enable us to get a suitable splitting, so that an additive IMEX-RK scheme could be applied. Using an elliptic reformulation the scheme boils down to just solving a linear elliptic problem for the density and then explicit updates for the momentum. The IMEX schemes developed are shown to be formally asymptotically consistent with the low Mach number limit of the Euler equations and are shown to be linearly $L^2$ stable. A second order space time fully discrete scheme is obtained in the finite volume framework using a combination of Rusanov flux for the explicit part and simple central differences for the implicit part. Results of numerical case studies are reported which elucidate the theoretical assertions regarding the scheme and its robustness. 
\end{abstract}

\begin{keywords}
Euler-Equations, Asymptotic Preserving Schemes, IMEX-Schemes, Linearly Implicit, Finite Volume Schemes
\end{keywords}
\begin{AMS}
 [2010] {Primary 35L45, 35L60, 35L65, 35L67; Secondary 65M06,
  65M08}
\end{AMS}
\section{Introduction}
The compressible Euler equations of gas dynamics serve as a staple for fluid dynamics models. Therefore, they attract a lot of attention for development of numerical schemes. They have been as relevant today as they ever were in the history, owing to their ability to express multiple flow regimes and, inability of numerical schemes to capture all at once. The compressible Euler equations are typically studied, from the perspective of numerical schemes for two distinct and complementary fluid regimes, namely;
\begin{itemize}
\item compressible regime: where the reference sound speed of the medium is of the same order as of the speed of the fluid under consideration; and 
\item weakly compressible or incompressible (low Mach) regime: where the sound speed is of orders of magnitude more than that of the velocity of the fluid under consideration.
\end{itemize}
In this paper it is the low Mach number regime for which we endeavour to design and analyse a class of numerical schemes. Typically, the Euler equations once subjected to proper non-dimensionalisation via appropriate scaling parameters, reveal a singular perturbation parameter. Many a times this parameter is denoted by $\veps$ which is a scaled Mach number \cite{AS20, BAL+14, BMY17, BQR+19, DT11, DLV17}. Strictly speaking the incompressible limit or regime is only achieved at the limit of $\veps \to 0$. But, in practice $\veps$ values in the vicinity of $\mcal{O}(10^{-1})$ or $\mcal{O}(10^{-2})$ signify the weakly compressible regime and values smaller than that are considered for flows which are more or less incompressible. 

Standard explicit schemes find it a hard task to maintain stability requirements when summoned for weakly incompressible flows, let alone incompressible flows. This is because of dire CFL stability restriction which enforces that the time steps to be $\mcal{O}(\veps)$. As a consequence the solver becomes practically almost non-evolving, in the incompressible regime. Having said this solving the stability issue doesn't still render a good numerical scheme. Numerical schemes also suffer from inaccuracies in the incompressible regime. These inaccuracies in the solution stem from the inability of the numerical methods to respect the transitional behaviour of the equations. There is a loss of information in the passage from continuous to discrete level. We refer the reader to \cite{Del10, GV99, kle95} and
the references therein for a detailed account of the anomalies.  

Convergence of solutions of the compressible Euler equations to
those of the incompressible Euler equations in the limit of zero Mach
number, and the associated challenges in numerical approximation is an
active area of research. The asymptotic preserving (AP) framework provides a robust framework to address these issues. An AP scheme has two main
characteristic features, exactly aiming to remedy the above noted
challenges associated with the numerical approximation of fluid
flows, which are:
\begin{itemize}
\item its stability requirements are independent of the multiscale or
  a singular parameter;
\item in the limit of the singular parameter, the numerical scheme
  transforms itself to a scheme for the limit system.
\end{itemize}
The notion of AP schemes was initially introduced by Jin \cite{Jin99} for kinetic transport equations; see also \cite{Deg13, Jin12} for a comprehensive review of this subject. This framework takes into account the transitional behaviour of the governing equations of the problem, and its stability requirements are independent of the singular perturbation parameter. Hence, an AP scheme for the compressible Euler equations automatically transforms to an incompressible solver when the
Mach number goes to zero. As discussed in \cite{Deg13}, semi-implicit time-stepping techniques provide a systematic approach to derive AP schemes; see, e.g.\ \cite{BAL+14, CDK12, DT11, NBA+14, ZN18}, for some semi-implicit AP schemes for the Euler or shallow water equations.      

The literature for AP schemes for Euler equations in the low Mach regime is quite abundant; see \cite{AS20, BQR+19, DLV17} and the references therein for a detailed exposure. To the best of out knowledge the pursuit to find an additive implicit explicit (IMEX) scheme \cite{PR01} which is linearly implicit is still going on. Typically the pressure term in the momentum is considered to be implicit in an additive IMEX scheme for the Euler equation. And this pressure term is non-linearly dependent on density. This leads to a semi-implicit scheme which is still non-linear needing for employment of a Newton solve. In \cite{BQR+19} the authors have developed IMEX scheme which are linearly implicit, but are not additively split. This is primarily the problem that we have focused on in this paper. 

The rest of this paper is organised as follows. In Section~\ref{sec:nd_ee_wg}, we present the governing equations i.e.\ the Euler equations of gas dynamics, with  appropriate non-dimensionalisation. This leads to a singular perturbation problem with the singular parameter denoted by $\veps$, the scaled Mach number. Further, we present a formal derivation of the incompressible limit system. This analysis helps us later to choose an appropriate term which is added to the momentum flux \cite{AS20, DT11}. Section~\ref{sec:tsd} is dedicated to the design, development and analysis of a first-order accurate IMEX-RK scheme. This plays a crucial role in understanding the necessary design elements and identifying the implicit and explicit flux terms for higher-order developments. Moreover, the first-order scheme is shown to be AP, in this section. In Section~\ref{sec:highordertd} the extension of the firs-order to higher order is carried out. This is achieved by employing additive IMEX-RK schemes \cite{ARS97, PR01}. A suitable space discretisation technique is presented in Section~\ref{sec:fully_disc}, which is based on the finite volume framework to get an implementable numerical method. In Section~\ref{sec:numerical_results} some numerical case studies are carried out to test and display the high-order accuracy, AP property and the performance of the IMEX-RK schemes schemes. Finally, the paper is closed in Section~\ref{sec:conclusions} with some conclusions and future plans, for further extension of these schemes to more general equation of states.  
\section{Euler Equations Scaled With Low Mach Regime References}
\label{sec:nd_ee_wg}
Consider the scaled non-dimensionalised Euler equations.
\begin{align}
&\Dt \rho + \dvg \mbq = 0 \label{eq:ee_mass_nd}\\
&\Dt \mbq + \dvg \left(\frac{\mbq \otimes \mbq}{\rho}\right) + \frac{\grd p}{\veps^2} = 0, \label{eq:ee_mom_nd}
\end{align}
where $\rho (t,\ux) \in \mbR$ is the density, $\mbq (t, \ux) = \rho \mbu \in \mbR^d$, $d = 1,2,3$ is the momentum and $p(t, \ux) = p(\rho) \in \mbR$ is the pressure. To close the system we assume either the system is isentropic or isothermal in nature, via a density pressure power law:
\begin{equation}\label{eq:eos}
p(\rho) = \rho^{\gamma}, \qquad \mbox{where} \ \gamma = 1 \ \mbox{for isothermal,} \ \mbox{and} \ \gamma > 1 \ \mbox{for isentropic gases}.
\end{equation}
In the above system $\veps$ is a singular perturbation parameter which is the scaled Mach number i.e. 
\begin{equation}
\veps = \mbox{Ma} := \frac{u_{\mbox{ref}}}{p_{\mbox{ref}} \  \rho_{\mbox{ref}}}.
\end{equation}
To formally derive the singular limits of the scaled isentropic Euler equation system \eqref{eq:ee_mass_nd}-\eqref{eq:ee_mom_nd} we assume that all the dependent variables i.e. $\rho, \mbq, p$ admit the following multiscale ansatz:
\begin{equation}\label{eq:ansatz}
f(t, x) = f_{(0)} (t, x) + \veps f_{(1)} (t, x) + \veps^2 f_{(2)} (t, x)
\end{equation}
\subsection{Zero Mach Number Limit $\veps \to 0$}
\label{subsec:zeromach}
In order to obtain the zero Mach number limit of the Euler equations \eqref{eq:ee_mass_nd}-\eqref{eq:ee_mom_nd} i.e. in the limit of $\veps \to 0 $, we plugin the ansatz \eqref{eq:ansatz} in the  Euler equation system \eqref{eq:ee_mass_nd}-\eqref{eq:ee_mom_nd} for all the dependent variables and equate the like powers of $\veps$ . The $\mcal{O} (\veps^{-2})$ terms in the momentum equations \eqref{eq:ee_mom_nd} give:
\begin{equation}\label{eq:lm_hydro_eq}
\nabla p_{(0)} = 0.
\end{equation}
Now using the equation of state (EOS) \eqref{eq:eos} we get and the above result we get:
\begin{equation}
\nabla \rho_{(0)} = 0,
\end{equation}
i.e. the leading order density $\rho_{(0)}$ is spatially constant. Similarly, considering $\mcal{O}(\veps^{-1})$ terms in the momentum balance equations \eqref{eq:ee_mom_nd} we get that the first-order density $\rho_{(1)}$ is also spatially constant. Therefore the density can be considered to have the following expansion:
\begin{equation}
\rho (t, x) = \rho_{(0)} (t) + \veps^2 \rho_{(2)} (t, x).
\end{equation}
Now considering the $\mcal{O} (\veps^0)$ terms in the mass conservation equation \eqref{eq:ee_mass_nd} and integrating over a spatial domain $\Omega$, yields,
\begin{equation}\label{eq:lm_mass_integral}
\frac{d}{dt} \int_{\Omega} \rho_{(0)} dx = \int_{\Omega} \dvg (\rho_{(0)} u_{(0)}) dx = \int_{\partial \Omega} (\rho_{(0)} u_{(0)}) \cdot \nu \ d \sigma.
\end{equation}
Assuming periodic or no-flux boundary conditions would lead to the right hand side integral of the above equation to vanish, yielding the independence of $\rho_{(0)}$ from the temporal variable $t$. Using the fact that $\rho_{(0)}$ is constant in \eqref{eq:lm_mass_integral} would give the incompressibility i.e. the divergence free condition for the velocity $u_{(0)}$ as:
\begin{equation}\label{eq:lm_div_cond}
\dvg u_{(0)} = 0.
\end{equation}
Finally, considering the $\mcal{O}(\veps^{0})$ terms of the momentum equation \eqref{eq:ee_mom_nd}, coupling it with the divergence condition \eqref{eq:lm_div_cond} we get the zero Mach number limit system as:
\begin{align}
\Dt u_{(0)} + \dvg \left( u_{(0)} \otimes u_{(0)} \right) + \nabla p_{(2)} &= 0, \label{eq:lm_vel_lim}\\
\dvg u_{(0)} &= 0. \label{eq:lm_div_lim}
\end{align}
Here the unknowns are the zeroth order velocity $u_{(0)}$ and the second order pressure $p_{(2)}$. To be precise, $p_{(2)}$ is scaled by the constant $\rho_{(0)}$ in the above system. The above set of equations are nothing but the standard incompressible Euler system for the unknowns $u_{(0)}$ and $p_{(2)}$. 
\begin{remark}
The incompressible limit system \eqref{eq:lm_vel_lim}-\eqref{eq:lm_div_lim} is of mixed hyperbolic-elliptic in nature, and it doesn't support acoustic waves. The second order pressure $p_{(2)}$ survives as the incompressible pressure and it plays the role of a Lagrange multiplier to enforce the divergence-free constraint \eqref{eq:lm_div_lim}. By taking the divergence of \eqref{eq:lm_vel_lim}, we get the following reformulated zero Mach number incompressible limit system:
\begin{align}
& \Dt u_{(0)} + \dvg \left( u_{(0)} \otimes u_{(0)} \right) + \nabla p_{(2)} = 0, \label{eq:lm_vel_lim_ref}\\
& \Delta p_{(2)} = - \nabla^2: \left(u_{(0)} \otimes u_{(0)} \right).
\label{eq:lm_ellip_ref}
\end{align}
For smooth solutions the zero Mach number limit system \eqref{eq:lm_vel_lim}-\eqref{eq:lm_div_lim} and the reformulated zero Mach number limit system \eqref{eq:lm_vel_lim_ref}-\eqref{eq:lm_ellip_ref} are equivalent.
\end{remark}
\begin{remark}
As $\rho_{(2)}$ features no where in the limit system one can ignore it completely to conclude that the zero Mach number scaling doesn't admit any perturbation in density.
\end{remark}
\section{Time Discretisation}
\label{sec:tsd}
In this section we present the design and analysis of a first-order time semi-discrete Runge-Kutta (RK) schemes for the Euler equation \eqref{eq:ee_mass_nd}-\eqref{eq:ee_mom_nd}. The first-order scheme serves as a fundamental element to design the scheme, owing to its simplicity and adequacy in terms of showcasing the design philosophy. This scheme is formally analysed to illustrate its AP property.

Based on the analysis of the Euler equations \eqref{eq:ee_mass_nd}-\eqref{eq:ee_mom_nd} presented in the previous section we introduce the following definition of a well-prepared data. 
\begin{definition}\label{def:well-prepared}
A tuple $(\rho, \mbq)$ of solutions to the compressible Euler equation system \eqref{eq:ee_mass_nd}-\eqref{eq:ee_mom_nd} is said to be well-prepared if the following decompositions hold:
\begin{align}
\rho &= \rho_{(0)} + \veps^2 \rho_{(2)} \label{eq:isen_rho_time_disc_AP} \\
\mbu &= \mbu_{(0)} + \veps \mbu_{(1)}, \label{eq:isen_u_time_disc_AP} 
\end{align}
where:
\begin{equation*}
\nabla \rho_{(0)} = 0 \qquad \mbox{and} \qquad \dvg u_{(0)} = 0.
\end{equation*}
\end{definition}
\subsection{Equilibrium Reformulation}
The asymptotic analysis of the Euler equations presented in the previous section shows that obtaining the formal incompressibility condition hinges on a very fundamental constraint, which is 
\begin{equation}
\nabla \rho_{(0)} = 0.
\end{equation} 
Which is a property of the density of the incompressible system as well. Therefore, we want to somehow be able to preserve this state of density. To this end we reformulate the Euler equations \eqref{eq:ee_mass_nd}-\eqref{eq:ee_mom_nd} so that this equilibrium term is explicitly present in the momentum equation, by adding and subtracting the gradient of the density scaled by $1 / \veps^2$ to the momentum equation \eqref{eq:ee_mom_nd}, to obtain the following:
\begin{equation}
\Dt \mbq + \dvg \left(\frac{\mbq \otimes \mbq}{\rho}\right) + \frac{\grd (p - \rho)}{\veps^2} + \frac{\grd \rho}{\veps^2} = 0, \label{eq:ee_mom_pn}
\end{equation}
Note that the above equation is equivalent to the original momentum equation \eqref{eq:ee_mom_nd}. Now, we propose a first order scheme for the Euler equations \eqref{eq:ee_mass_nd} and \eqref{eq:ee_mom_pn}.  
\subsection{First Order Time Semi-discrete Scheme}
To obtain a first-order accurate semi-implicit time discretisation the mass flux is treated  implicitly and the momentum flux, which is now a combination of the constraint $\grd \rho = 0$ is treated partly implicit and partly explicit. 
\begin{definition}
\label{def:1st_TSD}
The first-order time semi-discrete scheme updates the solution $(\rho^{n}, q^{n})$ at time $t^n$ to $(\rho^{n+1}, q^{n+1})$ at time $t^{n+1}$ via the following semi-implicit updates:
\begin{align}
&\frac{\rho^{n+1} - \rho^{n}}{\Dlt} + \dvg \mbq^{n+1} = 0  \label{eq:1st_mass_upd_tilde}\\
&\frac{\mbq^{n+1} - \mbq^{n}}{\Dlt} + \dvg \left( \frac{\mbq^n \otimes \mbq^n}{\rho^n} \right) + \frac{\nabla (p^{n} - \rho^n)}{\veps^2} + \frac{\grd \rho^{n+1}}{\veps^2} = 0,\label{eq:1st_mom_upd_tilde}
\end{align}
\end{definition}
We follow the approach of \cite{DT11} to obtain a reformulation of the above scheme. It is the reformulation which is implemented. To this end we rewrite the momentum update \eqref{eq:1st_mom_upd_tilde} as follows:
\begin{equation}\label{eq:1st_mom_upd_tildeh}
\mbq^{n+1} = \hat{\mbq}^{n+1} - \Dlt \frac{\grd \rho^{n+1}}{\veps^2} ,
\end{equation}
where $\hat{\mbq}^{n+1}$ is the part that can be explicitly updated i.e. 
\begin{equation}\label{eq:qnp1hat}
\hat{\mbq}^{n+1} := \mbq^{n} - \Dlt \dvg \left( \frac{\mbq^n \otimes \mbq^n}{\rho^n} \right)  - \Dlt \frac{\nabla (p^{n} - \rho^n)}{\veps^2}.
\end{equation}
Now substituting for $\mbq^{n+1}$ from \eqref{eq:1st_mom_upd_tildeh} in the mass update \eqref{eq:1st_mass_upd_tilde} we get the following elliptic problem for $\rho^{n+1}$:
\begin{equation*}
\rho^{n+1} = \rho^n - \Dlt \dvg (\hat{\mbq}^{n+1} - \Dlt \frac{\grd \rho^{n+1}}{\veps^2}).
\end{equation*}
\begin{definition}
\label{def:1st_RTSD}
The first-order reformulated time semi-discrete scheme updates the solution $(\rho^{n}, \mbq^{n})$ at time $t^n$ to $(\rho^{n+1}, \mbq^{n+1})$ at time $t^{n+1}$ by first evaluating $\hat{\mbq}^{n+1}$ via \eqref{eq:qnp1hat} and then
 $\rho^{n+1}$ is obtained as a solution of the following linear elliptic problem:
\begin{equation}\label{eq:elliptic_prob_w_p}
- \frac{\Dlt^2}{\veps^2} \Delta \rho^{n+1} + \rho^{n+1} = \rho^n - \Dlt \dvg \hat{\mbq}^{n+1}.
\end{equation}
and then $\mbq^{n+1}$ is obtained as an explicit update via:
\begin{equation}\label{eq:mom_upd_ref1st}
\mbq^{n+1} = \hat{\mbq}^{n+1} - \Dlt \frac{\nabla \rho^{n+1}}{\veps^2},
\end{equation}
where $\hat{\mbq}^{n+1}$ contains the terms which can be explicitly evaluated.
\end{definition}
\begin{remark}
Note that the reformulated first-order time semi-discrete scheme is computationally twice as efficient as the first-order semi-discrete scheme in Definition~\ref{def:1st_TSD}. As there is only one elliptic problem, that to for the density variable $\rho^{n+1}$ that has to be solved in case of Definition~\ref{def:1st_RTSD}. Where as in the other case depending on the number of spatial dimension the inversion matrix will be larger and larger. 
\end{remark}
\begin{proposition}
The first-order time semi-discrete scheme in Definition~\ref{def:1st_TSD} and the reformulated first-order time semi-discrete scheme in Definition~\ref{def:1st_RTSD}, are equivalent.
\end{proposition}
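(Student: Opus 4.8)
The plan is to establish the equivalence by a purely algebraic back-and-forth between the two updates, together with a brief verification that the reformulated scheme is well defined so that the word \emph{equivalent} carries its intended meaning.

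First I would treat the forward direction. Suppose $(\rho^{n+1},\mbq^{n+1})$ solves the scheme of Definition~\ref{def:1st_TSD}. Collecting in the momentum update \eqref{eq:1st_mom_upd_tilde} all the terms that depend only on data at time $t^n$ gives precisely \eqref{eq:1st_mom_upd_tildeh} with $\hat{\mbq}^{n+1}$ defined by \eqref{eq:qnp1hat}. Substituting this expression for $\mbq^{n+1}$ into the mass update \eqref{eq:1st_mass_upd_tilde} and using linearity of the divergence yields
\begin{equation*}
\frac{\rho^{n+1}-\rho^n}{\Dlt} + \dvg\hat{\mbq}^{n+1} - \Dlt\,\dvg\!\left(\frac{\grd\rho^{n+1}}{\veps^2}\right) = 0,
\end{equation*}
which, after multiplication by $\Dlt$ and rearrangement, is exactly the elliptic equation \eqref{eq:elliptic_prob_w_p}; and \eqref{eq:1st_mom_upd_tildeh} is nothing but \eqref{eq:mom_upd_ref1st}. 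Hence $(\rho^{n+1},\mbq^{n+1})$ is produced by Definition~\ref{def:1st_RTSD}.

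Next I would run the same steps in reverse. Suppose $(\rho^{n+1},\mbq^{n+1})$ is produced by Definition~\ref{def:1st_RTSD}, i.e.\ $\rho^{n+1}$ solves \eqref{eq:elliptic_prob_w_p} and $\mbq^{n+1}$ is given by \eqref{eq:mom_upd_ref1st}. Inserting \eqref{eq:mom_upd_ref1st} into the right-hand side of \eqref{eq:elliptic_prob_w_p} and undoing the algebra above recovers the mass update \eqref{eq:1st_mass_upd_tilde}, while \eqref{eq:mom_upd_ref1st} combined with the definition \eqref{eq:qnp1hat} of $\hat{\mbq}^{n+1}$ is the momentum update \eqref{eq:1st_mom_upd_tilde}. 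Therefore the two updates coincide.

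The only genuine point to check — and the main, though mild, obstacle — is that Definition~\ref{def:1st_RTSD} actually determines $\rho^{n+1}$ uniquely, so that the correspondence is a true bijection rather than a one-sided inclusion. This amounts to invertibility of the linear operator $\rho \mapsto -\tfrac{\Dlt^2}{\veps^2}\Delta\rho + \rho$ in \eqref{eq:elliptic_prob_w_p}. With the boundary conditions used throughout (periodic, or homogeneous Neumann) this operator is symmetric and positive definite: its bilinear form equals $\tfrac{\Dlt^2}{\veps^2}\norm{\grd\rho}^2 + \norm{\rho}^2$, which is coercive, so \eqref{eq:elliptic_prob_w_p} has a unique solution for any right-hand side and any $\Dlt,\veps>0$. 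Consequently the map $(\rho^n,\mbq^n)\mapsto(\rho^{n+1},\mbq^{n+1})$ in Definition~\ref{def:1st_RTSD} is single-valued, and the two directions above exhibit it as identical to the map of Definition~\ref{def:1st_TSD}. The identical argument carries over verbatim to the fully discrete setting of Section~\ref{sec:fully_disc}, with $\Delta$, $\grd$, $\dvg$ replaced by their finite-volume analogues, as long as the discrete Laplacian retains this positivity.
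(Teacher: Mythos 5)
Your argument is correct and matches the paper's own justification, which is precisely the substitution derivation given in the text preceding the proposition (rewrite the momentum update as \eqref{eq:1st_mom_upd_tildeh}, insert it into the mass update to obtain the elliptic problem \eqref{eq:elliptic_prob_w_p}, and reverse the steps). Your additional observation that the operator $\rho \mapsto -\tfrac{\Dlt^2}{\veps^2}\Delta\rho + \rho$ is coercive and hence uniquely invertible under periodic or no-flux boundary conditions is a worthwhile point the paper leaves implicit, and it is what makes the correspondence a genuine bijection.
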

\subsection{Asymptotic Consistency}
To be AP, the time semi-discrete scheme in Definition~\ref{def:1st_TSD} should be able to transition well with changing values of $\veps$ and in turn transform to a scheme for the incompressible system \eqref{eq:lm_div_cond}-\eqref{eq:lm_vel_lim} in the limit of $\veps \to 0$. Asymptotic consistency is the property of a numerical scheme to transform to a scheme of the limiting system in the limit of the singular perturbation parameter. It can be categorised into two subtypes, based on the initial data at time $t^n$, they are:
\begin{enumerate}
\item Weak asymptotic consistency: the initial data is well-prepared; e.g. see Definition~\ref{def:well-prepared};
\item Strong asymptotic consistency: the initial data is general.
\end{enumerate}
\begin{proposition}
Suppose that the data at time $t^n$ are well-prepared i.e. $(\rho^n, \mbu^n)$ admit the 
decomposition in Definition~\ref{def:well-prepared}. Then if the solution $(\rho^{n+1}, \mbu^{n+1})$ defined by the scheme in Definition~\ref{def:1st_RTSD} admits the multiscale decomposition \eqref{eq:isen_rho_time_disc_AP}-\eqref{eq:isen_u_time_disc_AP}, then it is
well-prepared as well. Further, the semi-discrete scheme in Definition~\ref{def:1st_RTSD} reduces to a consistent discretisation of the low Mach incompressible limit system, in other words the scheme is weakly asymptotically consistent.
\end{proposition}
\begin{proof}
To start with note that $(\rho^{n+1}, u^{n+1})$ are assumed to admit the following ansatz:
\begin{equation}\label{eq:ansatz_lm_np1}
\begin{aligned}
\rho^{n+1} &= \rho^{n+1}_{(0)} + \veps^2 \rho^{n+1}_{(2)} \\
\mbu^{n+1} &=  \mbu^{n+1}_{(0)} + \veps \mbu^{n+1}_{(1)} 
\end{aligned}
\end{equation}
Plugging in the above in \eqref{eq:1st_mom_upd_tilde} and considering the $\mcal{O}(\veps^{-2})$ terms yield:
\begin{equation}\label{eq:1stOmom_epsm2}
\grd (p_{(0)}^n - \rho_{(0)}^n) - \nabla \rho^{n+1}_{(0)} = 0 
\end{equation}
as the data is assumed to be well-prepared at time $t^n$, we have $\grd (p_{(0)} - \rho_{(0)}) = 0$. Therefore, the above equation implies:
\begin{equation}
\grd \rho^{n+1}_{(0)} = 0 
\end{equation}
Now consider the $\mcal{O}(\veps^0)$ terms of the mass update \eqref{eq:1st_mass_upd_tilde}
\begin{equation} \label{eq:1stO_massupd}
\frac{\rho_{(0)}^{n+1} - \rho_{(0)}^n}{\Dlt} = -\rho_{(0)}^n \dvg \mbu_{(0)}^n, 
\end{equation}
and integrating over a domain $\Omega$, upon imposition of appropriate boundary conditions we get:
\begin{equation}\label{eq:1stO_messequal}
\rho_{(0)}^{n+1} = \rho_{(0)}^{n} = \mbox{const}. 
\end{equation}
Using the above in the mass update \eqref{eq:1st_mass_upd_tilde} would yield the divergence condition for the velocity i.e.
\begin{equation}\label{eq:1stO_divcond}
\dvg \mbu^{n+1}_{(0)} = 0.
\end{equation}
Lastly, looking at the $\mcal{O}(\veps^0)$ terms of the momentum update \eqref{eq:1st_mom_upd_tilde} gives:
\begin{equation}
\frac{\mbu^{n+1}_{(0)} - \mbu^{n}_{(0)}}{\Dlt} + \dvg (\mbu_{(0)} \otimes \mbu_{(0)}) + \frac{\grd (p_{(2)}^n - \rho_{(2)}^n)}{\rho_{(0)}^n} + \frac{ \nabla \rho^{n+1}_{(2)}}{\rho_{(0)}^n} = 0,
\end{equation}
which is a consistent discretisation of the limit equation \eqref{eq:lm_vel_lim} with $p_{(2)}^{n+1}$ identified as $p_{(2)}^n - \rho_{(2)}^n + \rho_{(2)}^{n+1}$.
\end{proof}
\begin{remark}
Note that the above proof for asymptotic consistency hinges on the initial data to be well-prepared. 
\end{remark}
Now, let us rid ourselves of the assumption of well-preparedness of $(\rho^n, \mbu^{n})$ in the above Proposition. Then, considering $\mcal{O}(\veps^{-2})$ terms in \eqref{eq:1st_mom_upd_tilde} yields \eqref{eq:1stO_massupd}. Since,  $\grd (p_{(0)} - \rho_{(0)}) \neq 0$ we don't obtain that the zeroth order density at time $t^{n+1}$, $\rho_{(0)}^{n+1}$ to be constant. In turn, we don't obtain the divergence free condition for the limiting velocity. 
\begin{remark}
Therefore, we can conclude that the first-order IMEX scheme is not strongly asymptotically consistent.
\end{remark}
\subsection{Asymptotic Stability}
\label{ssec:asym_stab_1stO}
The second fundamental attribute of an AP scheme is that it should be stable and the stability condition must be independent of the singular perturbation parameter $\veps$. In what follows, we consider a linearised version of the Euler equations \eqref{eq:ee_mass_nd}-\eqref{eq:ee_mom_nd} to obtain a linear wave equation system (LWES), as was considered in \cite{AS20}. We analyse the first-order linear scheme in Definition~\ref{def:1st_TSD} applied on the LWES, to obtain an $L^2$-stability result.

Linearising the Euler system \eqref{eq:ee_mass_nd}-\eqref{eq:ee_mom_nd} around a constant state $(\bar{\rho}, \bar{\mbu})$ we get the following linear wave equation system (LWES) with advection:
\begin{equation}\label{eq:LWES}
\begin{aligned}
& \Dt \rho + (\bar{\mbu} \cdot \grd) \rho + \bar{\rho} \dvg u = 0, \\
& \Dt \mbu + (\bar{\mbu} \cdot \grd) \mbu + \frac{\bar{a}^2}{\bar{\rho} \veps^2} \grd \rho = 0,
\end{aligned}
\end{equation}
where $\bar{a} = \sqrt{p^{\prime} (\bar{\rho})}$. The first-order time semi-discrete scheme in Definition~\ref{def:1st_TSD}, applied to the LWES is:
\begin{equation}
\begin{aligned}
\frac{\rho^{n+1} - \rho^{n}}{\Dlt} + (\bar{\mbu} \cdot \grd) \rho^n + \bar{\rho} \dvg u^{n+1} = 0, \\
\frac{\mbu^{n+1} - \mbu^{n}}{\Dlt} + (\bar{\mbu} \cdot \grd) \mbu^n + \frac{\bar{a}^2}{\bar{\rho} \veps^2} \grd \rho^{n+1} = 0,
\end{aligned}
\end{equation}
Note, that the new scheme in Definition~\ref{def:1st_TSD} for the LWES is same as the linearised version of the non-linear AP scheme presented in \cite{AS20}. Therefore, the $L^2$-stability of the scheme follows from the Theorem~5.4 in \cite{AS20}. The analysis in \cite{AS20} was carried out by obtaining the modified equation for the $s$-stage IMEX scheme and it shows that any order IMEX scheme is $L^2$-stable under the assumption of bounded time steps and some modest restrictions on the RK coefficients. 
\begin{remark}
As Theorem~5.4 in \cite{AS20} is for a general $s$-stage IMEX scheme.
\end{remark}

\section{Higher Order Extension of Time Discretisation}
\label{sec:highordertd}
The time discretisation strategy introduced in the previous section splits the new momentum flux. All the non-linear flux terms are treated explicitly and implicitness is associated only with a linear flux. The splitting under consideration for the first order discretisation is   additive in nature. Therefore, it is well suited to extend the time discretisation to higher order using the additive IMEX-RK strategy from \cite{ARS97, PR01}. 
\subsection{IMEX-RK Time Discretisation}
IMEX-RK schemes provide a robust and efficient framework to design AP
schemes for singular perturbation problems. In this work, we only
consider a subclass of the IMEX-RK schemes, namely diagonally implicit
or (DIRK) schemes. An $s$-stage IMEX-RK scheme is characterised by the
two $s\times s$ lower triangular matrices $\tilde{A}=
(\tilde{a}_{i,j})$, and $A=(a_{i,j})$, the coefficients $\tilde{c}=(\tilde{c}_1,
\tilde{c}_2,\ldots,\tilde{c}_s)$ and $c=(c_1, c_2, \ldots, c_s)$, and
the weights $\tilde{\omega}=(\tilde{\omega}_1, \tilde{\omega}_2,
\ldots,\tilde{\omega}_s)$ and $\omega = (\omega_1, \omega_2, \ldots
,\omega_s)$. Here, the entries of $\tilde{A}$ and $A$ satisfy the
conditions $\tilde{a}_{i,j}=0$ for $ j \geq i$, and $a_{i,j}=0$ for
$j>i$. Let us consider the following stiff system of ODEs in an
additive form:   
\begin{equation}
  \label{eq:stiff_ODE}
  y^\prime = f(t,y) + \frac{1}{\veps} g(t,y),
\end{equation}
where $0<\veps\ll 1$ is called the stiffness parameter. The functions $f$ and
$g$ are known as, respectively, the non-stiff part and the stiff part of the
system \eqref{eq:stiff_ODE}; see, e.g.\ \cite{HW96}, for a
comprehensive treatment of such systems. 

Let $y^n$ be a numerical solution of \eqref{eq:stiff_ODE} at
time $t^n$ and let $\Dlt$ denote a fixed time-step. An $s$-stage
IMEX-RK scheme, cf.,\ e.g.\ \cite{ARS97,PR01}, updates $y^n$ to $y^{n+1}$ through $s$ 
intermediate stages:  
\begin{align}
  Y_i &= y^n + \Dlt \sum\limits_{j=1}^{i-1}\tilde{a}_{i,j} f(t^n + \tilde{c}_j\Dlt, Y_j) + 
        \Dlt\sum \limits_{j=1}^s a_{i,j} \frac{1}{\veps}g(t^n + c_j
        \Dlt,Y_j), \ 1 \leq i \leq s, \label{eq:imex_Yi} \\
  y^{n+1} &= y^n  + \Dlt \sum\limits_{i=1}^{s}\tilde{\omega}_{i} f(t^n
            + \tilde{c}_i\Dlt, Y_i) + \Dlt\sum \limits_{i=1}^s
            \omega_{i}\frac{1}{\veps} g(t^n +
            c_i\Dlt,Y_i). \label{eq:imex_yn+1} 
\end{align}
 The above IMEX-RK scheme
 \eqref{eq:imex_Yi}-\eqref{eq:imex_yn+1} can be symbolically represented by
 the double Butcher tableau: 
 \begin{figure}[htbp]
   \centering
   \begin{tabular}{c|c}
     $\tilde{c}^T$	&$\tilde{A}$\\
     \hline 
 			&$\tilde{\omega}^T$
   \end{tabular}
   \quad
   \begin{tabular}{c|c}
     $c^T$	&$A$\\
     \hline 
 		&$\omega^T$
   \end{tabular}
   \caption{Double Butcher tableau of an IMEX-RK scheme.}
   \label{fig:butcher_tableau}
 \end{figure}

The coefficients $\tilde{c}_i$ and $c_i$ and the weights
 $\tilde{\omega}_i$ and $\omega_i$ are fixed by the order conditions; 
 see \cite{PR01} for details.
In order to further simplify the analysis of the schemes presented in
this paper, we restrict ourselves only to two types of DIRK schemes,
namely the type-A and type-CK schemes which are defined below; see
\cite{KC03} for details. 
\begin{definition}
\label{eq:typeA_CK}
An IMEX-RK scheme is said to be of 
\begin{itemize}
\item type-A, if the matrix $A$ is invertible; 
\item type-CK, if the matrix $A \in \mbb{R}^{s \times s}, \ s \geq 2$,
  can be written as  
\begin{equation*}  
  A = 
  \begin{pmatrix}
    0 & 0 \\
    \alpha & A_{s-1 }
  \end{pmatrix},
\end{equation*}
where $\alpha \in \mbb{R}^{s-1} $ and $A_{s-1} \in \mbb{R}^{s-1 \times
s-1}$ is invertible.
\end{itemize} 
\end{definition}
\begin{definition}
An $s$-stage IMEX-RK scheme is said to be globally stiffly accurate (GSA), if
\begin{equation}
\tilde{a}_{s,j} = \tilde{\omega}_j, \quad a_{s,j} = \omega_j, \qquad \mbox{for all} \quad j = 1, \ldots, s.
\end{equation}
To state in words, the numerical solution at $t^{n+1}$ is the solution at the last intermediate stage, stage $s$. 
\end{definition}
\begin{remark}
It has been shown in \cite{BQR+19} that the GSA assumption is necessary to get the asymptotic consistency property for IMEX-RK schemes designed for the isentropic Euler equations. However, this assumption can be relaxed for IMEX-RK schemes designed for linear system, e.g. LWES, see \cite{ADS21} for details.
\end{remark}
\subsection{Higher Order Time Discretisation for the Euler Equations}
\begin{definition}\label{def:ho_TSD}
The $s$-stage IMEX-RK scheme for the Euler equations  \eqref{eq:ee_mass_nd}-\eqref{eq:ee_mom_nd} updates the solution $(\rho^{n}, \mbq^{n})$ at time $t^n$ 
to $(\rho^{n+1}, \mbq^{n+1})$ at time $t^{n+1}$ via the following $s$ intermediate stages:
\begin{align}
\rho^{k} &= \rho^n - \Dlt \sum_{\ell = 1}^{k} a_{k, \ell} \dvg \mbq^{\ell} 
\label{eq:higho_den_upd}  \\
\mbq^{k} &= \mbq^{n} - \Dlt \sum_{\ell = 1}^{k-1} \tilde{a}_{k, \ell} \left( \dvg  \left(\frac{\mbq^{\ell} \otimes \mbq^{\ell}}{\rho^{\ell}} \right) + \frac{\grd (p^{\ell} - \rho^{\ell})}{\veps^2} \right) - \Dlt \sum_{\ell = 1}^k a_{k, \ell} \frac{\grd \rho^{\ell}}{\veps^2},\label{eq:higho_mom_upd}
\end{align}
where $k = 1, \ldots, s$. As we choose only GSA schemes the final update is the same as the update at the last ($s^{th}$) stage i.e. the numerical solution at time $t^{n+1}$ is given by:
\begin{equation}
(\rho^{n+1}, \mbq^{n+1}) = (\rho^{s}, \mbq^{s})
\end{equation}
\end{definition}
As was done for the first order time semi-discrete scheme here also we derive a reformulation along the lines of \cite{DT11}, which is equivalent to the scheme presented above. To this end, we write the mass \eqref{eq:higho_den_upd} and momentum \eqref{eq:higho_mom_upd} updates as follows:
\begin{align}
\rho^{k} &= \hat{\rho}^n - \Dlt a_{k, k} \dvg \mbq^{k} \label{eq:higho_mas_upd_rc} \\
\mbq^{k} &= \hat{\mbq}^{k} - \Dlt a_{k, k} \frac{\grd \rho^{k}}{\veps^2},  \label{eq:higho_mom_upd_rc} 
\end{align}
where $\hat{\rho}^{k}$ and $\hat{\mbq}^{k}$ are respectively the part of the mass and momentum updates which can be evaluated fully explicitly at the $k^{th}$ stage,i.e.
\begin{align}
\hat{\rho}^{k} & := \rho^n - \Dlt \sum_{\ell = 1}^{k-1} a_{k, \ell} \dvg \mbq^{\ell} \label{eq:higho_mas_upd_h}\\
\hat{\mbq}^{k} & := \mbq^{n} - \Dlt \sum_{\ell = 1}^{k-1} \tilde{a}_{k, \ell} \left( \dvg  \left(\frac{\mbq^{\ell} \otimes \mbq^{\ell}}{\rho^{\ell}} \right) + \frac{\grd (p^{\ell} - \rho^{\ell})}{\veps} \right) - \Dlt \sum_{\ell = 1}^{k-1} a_{k, \ell} \frac{\grd \rho^{\ell}}{\veps^2} \label{eq:higho_mom_upd_h}
\end{align}
Now, having recast both the mass and momentum updates we use \eqref{eq:higho_mom_upd_rc} to substitute for $\mbq^k$ in the mass update \eqref{eq:higho_mas_upd_rc} which yields the following elliptic problem for $\rho^{k}$:
\begin{equation}\label{eq:rho_ell_ho}
-\frac{\Dlt^2 a_{k,k}^2}{\veps^2} \Delta \rho^k + \rho^k = \hat{\rho}^k - \Dlt \hat{\mbq}^k
\end{equation}
\begin{definition}\label{def:ho_RTSD}
The $s$-stage reformulated IMEX-RK time semi-discrete scheme for the Euler equations \eqref{eq:ee_mass_nd}-\eqref{eq:ee_mom_nd} updates the solution $(\rho^n, u^n)$ via $s$ intermediate stages , where the $k^{th}$ stage is given by:
\begin{itemize}
\item The explicit part of mass and momentum $\hat{\rho}^k$ and $\hat{\mbq}^k$ are first updated via \eqref{eq:higho_mas_upd_h} and \eqref{eq:higho_mom_upd_h}, respectively. 
\item The elliptic problem \eqref{eq:rho_ell_ho} is solved for $\rho^k$.
\item Then $\mbq^k$ is updated explicitly using $\rho^k$ obtained in the previous step via \eqref{eq:higho_mom_upd_rc}. 
\end{itemize}
Finally, the numerical solution $(\rho^{n+1}, \mbu^{n+1})$ at time $t^{n+1}$ is the solution at the $s^{th}$ stage, as the IMEX-RK scheme is GSA. 
\begin{equation*}
(\rho^{n+1}, \mbu^{n+1}) = (\rho^{s}, \mbu^{s})
\end{equation*}
 \end{definition}
\begin{proposition}
The high-order time semi-discrete scheme in Definition~\ref{def:ho_TSD} and the reformulated high-order time semi-discrete scheme in Definition~\ref{def:ho_RTSD}, are equivalent.
\end{proposition}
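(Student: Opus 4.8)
The plan is to prove the equivalence stage by stage, by induction on the stage index $k$, exploiting the observation that in \emph{both} definitions the explicit (``hatted'') quantities $\hat{\rho}^{k}$ and $\hat{\mbq}^{k}$ in \eqref{eq:higho_mas_upd_h}--\eqref{eq:higho_mom_upd_h} depend only on $\rho^{n}$, $\mbq^{n}$ and the already-computed stages $\rho^{\ell}, \mbq^{\ell}$ with $\ell < k$, so they are the same functions of the past history regardless of which formulation is used.

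First I would carry out the purely algebraic recasting for a fixed stage $k$. In the mass update \eqref{eq:higho_den_upd} one isolates the diagonal term $\ell = k$; since $A$ is lower triangular this is the only implicit contribution, and what remains is by definition $\hat{\rho}^{k}$, so \eqref{eq:higho_den_upd} is exactly \eqref{eq:higho_mas_upd_rc}. For the momentum update \eqref{eq:higho_mom_upd} the explicit sum (carrying the strictly lower triangular $\tilde{A}$) already runs only to $k-1$, so isolating the $\ell = k$ term of $\sum_{\ell=1}^{k} a_{k,\ell}\,\grd\rho^{\ell}$ leaves precisely $\hat{\mbq}^{k}$, which is \eqref{eq:higho_mom_upd_rc}. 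Substituting \eqref{eq:higho_mom_upd_rc} for $\mbq^{k}$ in \eqref{eq:higho_mas_upd_rc} and using $\dvg\,\grd = \Delta$ then yields the scalar elliptic problem \eqref{eq:rho_ell_ho}. Hence every tuple $(\rho^{k},\mbq^{k})_{k=1}^{s}$ generated by Definition~\ref{def:ho_TSD} also satisfies the three steps of Definition~\ref{def:ho_RTSD}.

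For the converse I would show that the three steps of Definition~\ref{def:ho_RTSD} determine $(\rho^{k},\mbq^{k})$ uniquely from the earlier stages, and then reverse the algebra. The one point worth spelling out is the solvability of \eqref{eq:rho_ell_ho}: whenever $a_{k,k}\neq 0$ (every stage of a type-A scheme, and stages $2,\dots,s$ of a type-CK scheme) the operator on the left of \eqref{eq:rho_ell_ho} has the form $\mathrm{Id} - c\,\Delta$ with $c>0$, hence is coercive and invertible under the periodic or no-flux boundary conditions in force, so $\rho^{k}$ is well defined; when $a_{k,k}=0$ (the first stage of a type-CK scheme) \eqref{eq:rho_ell_ho} degenerates to $\rho^{k}=\hat{\rho}^{k}$ and \eqref{eq:higho_mom_upd_rc} to $\mbq^{k}=\hat{\mbq}^{k}$, i.e. the stage is fully explicit and the two definitions coincide trivially. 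With $\rho^{k}$ the unique solution of \eqref{eq:rho_ell_ho} and $\mbq^{k}$ then defined by \eqref{eq:higho_mom_upd_rc}, unwinding the substitution shows that $\rho^{k}$ satisfies \eqref{eq:higho_mas_upd_rc}, and reinstating the definitions \eqref{eq:higho_mas_upd_h} and \eqref{eq:higho_mom_upd_h} recovers the original updates \eqref{eq:higho_den_upd}--\eqref{eq:higho_mom_upd}.

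Finally, the induction closes: since $\hat{\rho}^{k}$ and $\hat{\mbq}^{k}$ are the same functions of the data and of stages $1,\dots,k-1$ in both formulations, agreement of the first $k-1$ stages forces agreement at stage $k$; and because the schemes are stiffly accurate, $(\rho^{n+1},\mbq^{n+1}) = (\rho^{s},\mbq^{s})$ in both formulations, so the full timestep updates agree as well. I do not expect a genuine obstacle here, the argument being essentially bookkeeping; the only substantive ingredient is the coercivity and invertibility of the scalar elliptic operator in \eqref{eq:rho_ell_ho}, which is exactly the fact already underlying the first-order reformulation in Definition~\ref{def:1st_RTSD}.
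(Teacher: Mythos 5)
Your argument is correct and follows essentially the same route as the paper, which states the proposition without a formal proof and instead lets the algebraic recasting \eqref{eq:higho_mas_upd_rc}--\eqref{eq:rho_ell_ho} preceding Definition~\ref{def:ho_RTSD} serve as the derivation; your stage-by-stage induction is exactly that bookkeeping made explicit. The only additions beyond the paper's implicit argument are the invertibility of the operator $\mathrm{Id} - c\,\Delta$ and the degenerate case $a_{k,k}=0$ for the first stage of a type-CK tableau, both of which are correct and worth recording.
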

\begin{remark}
As both the above presented schemes are equivalent, it is the reformulated time semi discrete scheme in Definition~\ref{def:ho_RTSD} is the one which is used for implementation, owing to its computational efficiency. 
\end{remark}
\subsection{Asymptotic Consistency}
The high-order time semi-discrete scheme in Definition~\ref{def:ho_RTSD} should necessarily  boil down to a consistent discretisation of the incompressible system \eqref{eq:lm_div_cond}-\eqref{eq:lm_vel_lim}, in-order to be AP.
\begin{proposition}\label{prop:ho_ac}
Suppose that the data at time $t^n$ are well-prepared i.e. $(\rho^n, \mbu^n)$ admit the 
decomposition in Definition~\ref{def:well-prepared}. Then, if the solution $(\rho^{n+1}, \mbu^{n+1})$ defined by the scheme in Definition~\ref{def:ho_TSD} admits the multiscale decomposition \eqref{eq:isen_rho_time_disc_AP}-\eqref{eq:isen_u_time_disc_AP}, then it is
well-prepared as well. Further, the semi-discrete scheme in Definition~\ref{def:ho_TSD} reduces to a consistent discretisation of the low Mach incompressible limit system, in other words the higher order IMEX-RK scheme is weakly asymptotically consistent.
\end{proposition}
\begin{proof}
To start the proof consider the first stage, i.e. $k = 1$, which corresponds to a fully implicit step. By plugging in the ansatz \eqref{eq:ansatz} for each of the dependent variables in \eqref{eq:higho_den_upd}-\eqref{eq:higho_mom_upd} and then equating like powers of $\mcal{O}(\veps^{-2})$ terms in the momentum update \eqref{eq:higho_mom_upd}, yields:
\begin{equation*}
\grd \rho_{(0)}^{1} = 0
\end{equation*}
So, the zeroth order density $\rho_{(0)}^1$ is spatially constant. Analogously, the same result can be extended to $\rho_{(1)}^1$ by considering the terms corresponding to $\mcal{O}(\veps^{-1})$. Therefore, $\rho^{1}$ admits the ansatz \eqref{eq:ansatz_lm_np1} i.e. it is well-prepared. Next, we consider the $\mcal{O}(\veps^{0})$ terms in the mass update \eqref{eq:higho_den_upd} to get:
\begin{equation} \label{eq:1st_stage_massupd}
\frac{\rho_{(0)}^1 - \rho_{(0)}^n}{\Dlt} = -a_{1,1} \rho_{(0)}^1 \dvg \mbu_{(0)}^1.
\end{equation}
Integrating the above and using periodic boundary conditions as was done for the asymptotic analysis for the Euler equations, we get:
\begin{equation*}
\rho_{(0)}^1 = \rho_{(0)}^n.
\end{equation*}
Using this in equation \eqref{eq:1st_stage_massupd} and under the assumption that $a_{1,1} \neq 0$ (which is not true for a type-CK scheme) gives the divergence condition:
\begin{equation*}
\dvg \mbu_{(0)}^1 = 0
\end{equation*}
This completes the proof for $k = 1$. To proceed further for $k = 2$ onwards, we use the rewritten mass and momentum update expressions \eqref{eq:higho_mas_upd_rc}-\eqref{eq:higho_mom_upd_rc} and proceed as is done for $k = 1$ to obtain 
\begin{equation*}
\begin{aligned}
\rho_{(0)}^k &= \rho_{(0)}^n, \\
\dvg u^{k}_{(0)} &= 0 \qquad \mbox{for} \quad k = 2, \ldots, s \qquad (\mbox{when, } a_{k,k} \neq 0)
\end{aligned}
\end{equation*}
As the final solution $(\rho^{n+1}, \mbu^{n+1}) = (\rho^{s}, \mbu^{s})$, we recover the incompressibility condition for the same. Now considering the $\mcal{O}(\veps^{0})$ terms in the momentum update at $k = s$ and two constraints we get the  final update for the limiting scheme as: 
\begin{equation}\label{eq:limit_scheme_ho}
\begin{aligned}
\rho_{(0)}^{n+1} &= \mbox{const.},  \\ 
u^{n+1}_{(0)} &= u_{(0)}^n - \Dlt \sum_{\ell = 1}^s \tilde{\omega}_{\ell} \left(\dvg (u^{\ell}_{(0)}  \otimes u^{\ell}_{(0)}) + \frac{\grd (p_{(2)}^{\ell} - \rho_{(2)}^{\ell})}{\rho_{(0)}^{\ell}} \right)- \Dlt \sum_{\ell = 1}^s \omega_{\ell} \frac{\grd \rho_{(2)}^{\ell}}{\rho_{(0)}^{\ell}}\\
\dvg u_{(0)}^{n+1} &= 0.
\end{aligned}
\end{equation}
which is a consistent discretisation of the incompressible Euler system 
\end{proof}
\begin{remark}
Note that the derivation of asymptotic consistency relies on either the well-preparedness or the type-A assumption. At least one of it has to be imposed at any time to get the incompressibility constraints for each of the stages. If the data is well-prepared, for a type-CK scheme the first step is trivial, and then on the proof follows similar lines as that of a type-A scheme.
\end{remark}
Now, lets assume that the initial data $(\rho^n, \mbu^n)$ is not well-prepared.

\textbf{Case-1:} (Type-A scheme) \\
First time-step $t^n \to t^{n+1}$:\\
As $a_{1,1} \neq 0$ and $\tilde{a}_{1,1} = 0$ we have by equating like powers of $\mcal{O}(\veps^{-2})$ terms in the momentum update \eqref{eq:higho_mom_upd}:
\begin{equation*}
\rho_{(0)}^1 = 0,
\end{equation*}
but since $\grd \rho_{(0)}^n \neq 0$ we don't have the divergence free condition for $\mbu^1_{(0)}$. Similarly, equating like powers of $\mcal{O}(\veps^{-2})$ terms in the momentum update of each stage $k$ and considering that result that $\rho_{(0)}^{k-1} = 0$ we will get:
\begin{equation*}
\rho_{(0)}^k = 0, \qquad \mbox{for all } k = 1, \dots, s.
\end{equation*}
Since the scheme under consideration is GSA we have $\grd \rho_{(0)}^{n+1} = 0$. But, we don't have the divergence free condition for $\mbu^{n+1}_{(0)}$.\\
Second time-step $t^{n+1} \to t^{n+2}$:\\
As with the previous time-step, analogously we will obtain 
\begin{equation*}
\rho_{(0)}^k = 0, \qquad \mbox{for all } k = 1, \dots, s.
\end{equation*}
Since $\grd \rho_{(0)}^{n+1} = 0$ we have, by considering the $\mcal{O}(\veps^{0})$ terms in the mass update \eqref{eq:higho_den_upd}:
\begin{equation} \label{eq:1st_stage_massupd_2ndts}
\frac{\rho_{(0)}^1 - \rho_{(0)}^{n+1}}{\Dlt} = -a_{1,1} \rho_{(0)}^1 \dvg \mbu_{(0)}^1.
\end{equation}
Integrating the above and using periodic boundary conditions, we get:
\begin{equation*}
\rho_{(0)}^1 = \rho_{(0)}^{n + 1},
\end{equation*}
and as a consequence of the above we also get the divergence free condition for the limiting velocity: 
\begin{equation*}
\dvg \mbu_{(0)}^1 = 0
\end{equation*}
Proceeding, along the lines of the proof of Proposition~\ref{prop:ho_ac} we will obtain the limiting scheme which is same as \eqref{eq:limit_scheme_ho}, but for $(\rho^{n+2}, \mbu^{n+2})$.\\
\textbf{Case-2:} (Type-CK scheme) 
As $a_{1,1} = 0$ and $\tilde{a}_{1,1} = 0$, the first stage is a trivial update which gives:
\begin{equation*}
\rho^1 = \rho^n \qquad \mbox{and} \qquad \mbu^1 = \mbu^n.
\end{equation*}
Therefore, both $\rho^1$ and $\mbu^1$ are not well-prepared after the first stage, which was not the result in the previous case as $\rho^1$ was well-prepared. Now, moving to the second stage without loss of generality assume that $\tilde{a}_{2,1} \neq 0$. We also have $\tilde{a}_{2,2} = 0$, $a_{2,1} = 0$ and $a_{2,1} \neq 0$. Considering, the $\mcal{O}(\veps^{-2})$ terms in the momentum update \eqref{eq:higho_mom_upd} for $k = 2$, yields:
\begin{equation*}\label{eq:hOmom_epsm2}
\tilde{a}_{2,1} \grd (p_{(0)}^1 - \rho_{(0)}^1) - a_{2,2} \grd \rho^{2}_{(0)} = 0,
\end{equation*}
since $\grd \rho_{(0)}^1 \neq 0$, the above equation implies $\grd \rho^{2}_{(0)} \neq 0$ i.e. the non-well-preparedness of $\rho^2$. Similarly, unlike the type-A scheme we don't get the well-preparedness of density for any of the intermediate stage. Therefore, at any later time-steps we don't obtain the well-preparedness of numerical solution.
\begin{remark}
From the above analysis we conclude that the choice of type-A variant of the IMEX-scheme in Definition~\ref{def:ho_TSD} is strongly asymptotically consistent, whereas the same is not true for type-CK. 
\end{remark}
\subsection{Asymptotic Stability}
\label{ssec:asym_stab_hO}
As done for the first-order IMEX scheme in Section~\ref{ssec:asym_stab_1stO}, we write down a linearised version of the IMEX scheme in Definition~\ref{def:ho_TSD} for the LWES \eqref{eq:LWES}. The $k^{th}$ stage of the $s$-stage IMEX-RK applied to the LWES \eqref{eq:LWES} is given by:
\begin{equation}
\begin{aligned}
\rho^{k} &=  \rho^{n} - \Dlt \sum_{\ell = 1}^{k-1} (\bar{\mbu} \cdot \grd) \rho^{\ell} - \Dlt \sum_{\ell = 1}^{k}\bar{\rho} \dvg u^{\ell}, \\
\mbu^{k} &= \mbu^{n} - \Dlt \sum_{\ell = 1}^{k-1} (\bar{\mbu} \cdot \grd) \mbu^{\ell} - \Dlt \sum_{\ell = 1}^{k}\frac{\bar{a}^2}{\bar{\rho} \veps^2} \grd \rho^{\ell},
\end{aligned}
\end{equation}
This scheme is same as the general $s$-stage linearised IMEX scheme analysed in \cite{ADS21, AS20}. In \cite{AS20}, a second order modified equation was obtained. In Theorem~5.4 \cite{AS20} with an appropriate definition of energy, it was shown that the second-order accurate general $s$-stage IMEX scheme is linearly $L^2$-stable under some reasonable assumptions on the IMEX-RK coefficients and a bounded time-step. Whereas in \cite{ADS21} the $L^2$-stability of the scheme is proved in Theorem~4.5, wherein a weak formulation is considered to show that the scheme dissipates energy.
\section{Space-Time Fully Discrete Finite Volume IMEX-RK Schemes}
\label{sec:fully_disc}
In this section the reformulated high-order time discretisation in Definition~\ref{def:ho_RTSD} will be coupled with an appropriate space discretisation strategy to obtain a fully discrete, implementable numerical scheme. To this end, a second order fully discrete scheme in the finite volume framework is obtained by a combination of Rusanov flux for the explicit fluxes and a second order central differencing for implicit fluxes. 

To make the presentation of the fully discrete scheme very compact let us introduced the
vector $i = (i_1, i_2, i_3)$ where each $i_m$ for $m = 1,2,3 $
represent $x_m$ the space direction. We further introduce the
following finite difference and averaging operators: e.g.\ in the 
$x_m$-direction  
\begin{equation}
  \label{eq:ep_deltax1}
  \delta_{x_m}w_{i}=w_{i+\frac{1}{2}e_m}-w_{i-\frac{1}{2}e_m},
  \quad
  \mu_{x_m}w_{i}=\frac{w_{i+\frac{1}{2}e_m}+w_{i-\frac{1}{2}e_m}}{2},
\end{equation}
for any grid function $w_{i}$. We denote the explicit momentum flux by $F$ where each of its $m^{th} $component is given by 
\begin{equation}\label{eq:notation_ref}
F_m(U)= \frac{q_m}{\rho}q+ (p - \rho)e_m
\end{equation}
\begin{definition}\label{def:fully_disc_schm}
The $k^{th}$ stage of the $s$-stage fully discrete IMEX scheme for the Euler equation system is defined as follows. The explicit variables are computed first as:
\begin{align}
\hat{\rho}^{k}_{i} &= \rho^n_i - \sum_{l = 1}^{k-1} a_{k, l} \sum_{m = 1}^3 \nu_m \delta_{x_m} \mathcal{G}_{m, i}^l, \\
\hat{q}^{k}_{i} &= q^n_i - \sum_{l = 1}^{k-1} \tilde{a}_{k, l} \sum_{m = 1}^3 \nu_m \delta_{x_m} \mathcal{F}_{m, i}^l 
    + \sum_{l = 1}^{k-1} a_{k, l}  \sum_{m = 1}^3 \nu_m \delta_{x_m} \mu_{x_m} \rho_{i}^l e_m
\end{align}
Following, the above update $\rho^{k}_i$ is updated by solving the following elliptic problem:
\begin{equation}
   \sum_{m=1}^3  \frac{\delta_{x_m}}{\Delta x_m} \left( \frac{\delta_{x_m}}{\Delta x_m} \rho^k_{i} \right) + \rho^k_i =
    \hat{\rho}_{i}^k - a_{k,k}\sum_{m=1}^3
    \nu_m\delta_{x_m}\hat{q}_{m, i}^k, \label{eq:ep_sk_eref_schm_dfn_FD_phi} 
  \end{equation}
Followed by the explicit evaluations,
\begin{equation}
    q^k_{m, i} = \hat{q}^k_{m,i} - a_{k,k} \nu_m \delta_{x_m} \rho_i^{k}, \quad m = 1, 2 \mbox{ and } 3 
\end{equation}
Here, the repeated index $m$ takes values in $\{ 1, 2, 3 \}$, $\nu_m := \frac{\Dlt}{\Delta x_m}$ denote the mesh ratios and the vector $(\mathcal{G}_m)$ is an approximation of the mass flux $q$, $\mathcal{F}_m$ is an approximation of the $m^{th}$ element of the flux $F$ in \eqref{eq:notation_ref} and the are defined as: 
\begin{equation}
    \begin{aligned}
        \mathcal{G}^l_{m, i + \frac{1}{2} e_m} &= \frac{1}{2} (q^l_{m,i + e_m} + q^l_{m, i}) \\
        \mathcal{F}^l_{m, i + \frac{1}{2} e_m} &=  \frac{1}{2} \left( F_{m} (U^{l, +}_{i + \frac{1}{2} e_m}) + F_{m} (U^{l, -}_{i + \frac{1}{2} e_m})\right) - \frac{\alpha_{m, i + \frac{1}{2} e_m}}{2} \left(q_{m, i + \frac{1}{2} e_m}^{k, +} - q_{m, i + \frac{1}{2} e_m}^{k, -} \right) 
    \end{aligned}
\end{equation}
Here, for any conservative variable $w$, we have denoted by
$w^\pm_{i+\frac{1}{2} e_m}$, the interpolated states obtained
using the piecewise linear reconstructions. The wave-speeds are 
computed as, e.g.\ in the $x_1$-direction
\begin{equation}
  \label{eq:wave_speed1}
  \alpha_{1, i
    +\frac{1}{2}e_1}:=2\max\left(\left|\frac{q_{1,i+\frac{1}{2}e_1}^{k,-}}{\rho_{i+\frac{1}{2}e_1}^{k,-}}\right|,
    \left|\frac{q_{1,i+\frac{1}{2}e_1}^{k,+}}{\rho_{i+\frac{1}{2}e_1}^{k,+}}\right|\right).  
\end{equation}
\end{definition}
The momentum flux $F$ is approximated by a Rusanov-type
flux, and the the mass flux is approximated using simple
central differences. Hence, the eigenvalues of the Jacobians of the part of the flux which is approximated by Rusanov-type flux can be obtained as $\lambda_{m,1}=0, \lambda_{m,2}= \lambda_{m,3}=\frac{q_m}{\rho}$ and $\lambda_{m,4}= 2 \frac{q_m}{\rho}$ and the CFL condition is given by the time-step restriction on $\Dlt$ at time $t^n$: 
\begin{equation}
\label{eq:CFL_cond}
\Dlt\max_{i}\max_m\left(\frac{\left|\lambda_{m,3}\left(U_{i}^n\right)\right|}{\Delta
      x_m}, \frac{\left|\lambda_{m,4}\left(U_{i}^n\right)\right|}{\Delta
      x_m}\right)=\nu,
\end{equation} 
where $\nu<1$ is the given CFL number. 
\section{Numerical Results}
\label{sec:numerical_results}
This section is devoted to test the proposed scheme, numerically. Numerical experiments conducted here are aimed towards demonstrating the following claimed properties of the scheme:
\begin{enumerate}
\item uniform second order convergence in the asymptotic regime;
\item uniform stability of the scheme with respect to the Mach number $\veps$;
\item convergence towards asymptotic solution (asymptotic consistency).
\end{enumerate}
The second-order IMEX-RK scheme represented by Butcher tableaux in Figure~\ref{fig:imexDP} from \cite{DP13} is considered for all the test cases.
\begin{figure}[htbp]
  \small
  \centering
\begin{tabular}{c|c c c c}
    0	  & 0		& 0	    & 0	     & 0\\
    0     & 0		& 0	    & 0	     & 0\\
    1     & 0		& 1     & 0	     & 0\\
    1     & 0       & $1/2$ & $1/2$  & 0\\ 
    \hline 
    0     & 0       & $1/2$ & $1/2$  & 0\\ 
  \end{tabular}
  \hspace{12pt}
  \begin{tabular}{c|c c c c}
    $\gamma$ & $\gamma$  & 0	         & 0	          & 0  \\
    0	     & $-\gamma$ & $\gamma$      & 0	          & 0  \\
    $1$	     & 0         & $1 - \gamma$  & $\gamma$       & 0  \\
    $1$	     & 0         & $1/2$         & $1/2 - \gamma$ & $\gamma$  \\
    \hline 
             & 0         & $1/2$         & $1/2 - \gamma$ & $\gamma$ 
  \end{tabular}
  \caption{Double Butcher tableaux of type-A Additive IMEX schemes. 
  DP2-A(2, 4, 2).}
  \label{fig:imexDP}
\end{figure}
\subsection{1D Riemann Problem}
\label{ssec:1drp}
This test case is a combination of several Riemann problems in one space dimension, considered for the first time in \cite{DT11}. The initial data in-terms of the conserved variables reads:
\begin{equation}
\begin{aligned}
\rho (0, x_1) &=  1, \qquad &q(0, x_1) &= 1 - \frac{\veps^2}{2}, \qquad &x& \in [0, 0.2] \cup [0.8, 1], \\
\rho (0, x_1) &=  1 + \veps^2, &\qquad q(0, x_1) &= 1,  \qquad &x& \in (0.2, 0.3], \\ 
\rho (0, x_1) &=  1, \qquad &q(0, x_1) &= 1 + \frac{\veps^2}{2}, \qquad &x& \in (0.3, 0.7],
\\
\rho (0, x_1) &=  1 - \veps^2, &\qquad q(0, x_1) &= 1,  \qquad &x& \in (0.7, 0.8].
\end{aligned}
\end{equation}
The isentropic constant $\gamma = 2$. The boundaries are periodic in nature. The tests was carried out for four different values of the reference Mach number,  $\veps \in \{ 0.8, 0.3, 0.05, 0.005 \}$. For each of the tests a reference solution was computed using a first order classical scheme (Fully Explicit) \cite{DT11}, with a very fine mesh of $N = 10000$ mesh points upto a final time $T = 0.05$. We aim to compare the performance of the newly developed linear IMEX scheme against the classical higher-order RK scheme \cite{ AS20, DT11}. To this end, the IMEX AP scheme was fixed by choosing the DP2-A(2,4,2) Butcher tableaux for the coefficients and the for the classical (CL) scheme the explicit matrix of the DP2-A(2,4,2) was chosen as it is second order accurate. Both the schemes were run on a mesh of 200 mesh points till the final time with a CFL = 0.45. The AP scheme is made to abide by the CFL condition \eqref{eq:CFL_cond}, which is independent of $\veps$ and the classical scheme is subjected to a standard CFL condition for a Rusanov scheme, which depends on the reference Mach number.

Compressible regime: Figure~\ref{fig:rp_comp} shows the plots of the density and momentum profiles of both the AP and CL scheme plotted against the reference solution for $\veps = 0.8, 0.3$. We observe that in the compressible regime the AP scheme and the CL scheme have very similar solution profiles which more or less agree with the reference solutions. So, we conclude that the AP scheme is able to perform as well as the CL scheme in this regime.
\begin{figure}[htbp]
  \centering
  \includegraphics[height=0.26\textheight]{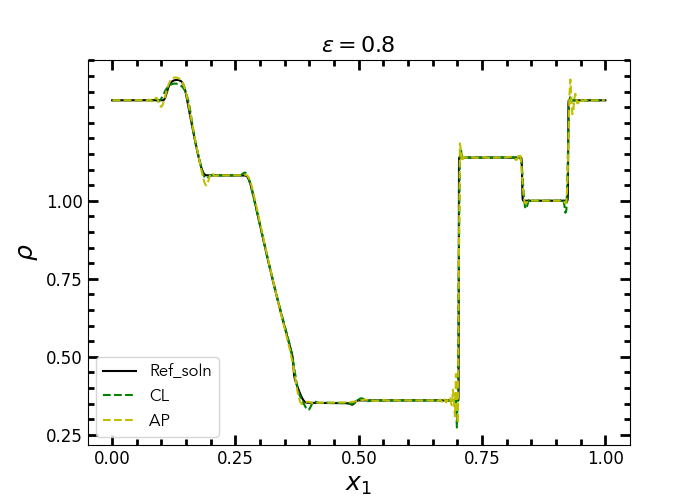}
  \includegraphics[height=0.26\textheight]{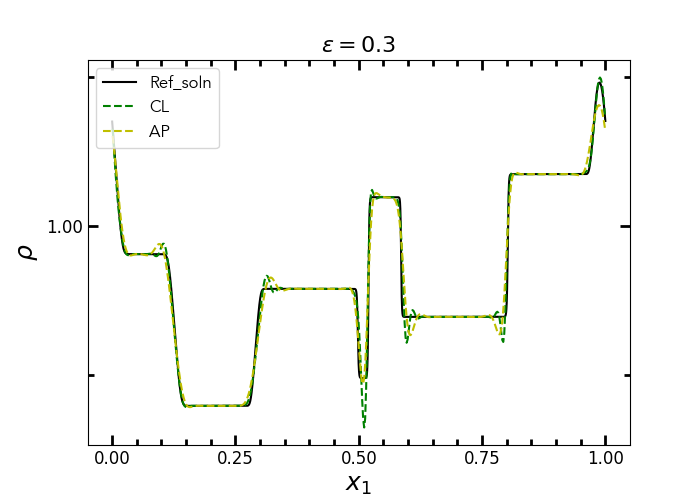} \\
  \includegraphics[height=0.26\textheight]{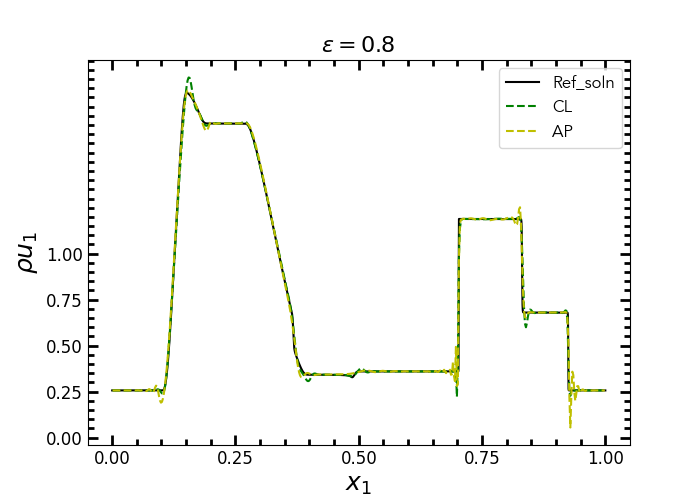}
  \includegraphics[height=0.26\textheight]{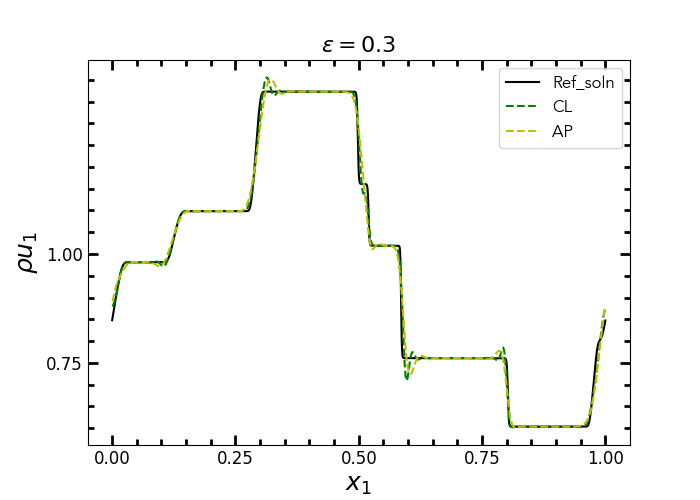} 
\caption{1D Riemann Problem: Density and momentum profiles at time T = 0.05, in the compressible regime i.e. $\veps = 0.8 \mbox{ and } 0.3$ for the AP and CL scheme for N = 200}  
  \label{fig:rp_comp}
\end{figure}

Incompressible regime: Figure~\ref{fig:rp_incomp} shows the plots of the density and momentum profiles of both the AP and CL scheme plotted against the reference solution for $\veps = 0.05 \mbox{ and } 0.005$. For the weakly incompressible flow with $\veps = 0.05$ both the non-AP classical schemes and the AP scheme trace the reference solution, with considerable differences. The AP schemes gives a smooth solution unlike the CL scheme, which is a property desired from a numerical scheme in this regime. For the fully incompressible regime ($\veps = 0.005$) the AP scheme solution converges to the solution of the incompressible solution, whereas the CL schemes traces the reference solution while staying close to the incompressible solution space.
\begin{figure}[htbp]
  \centering
  \includegraphics[height=0.26\textheight]{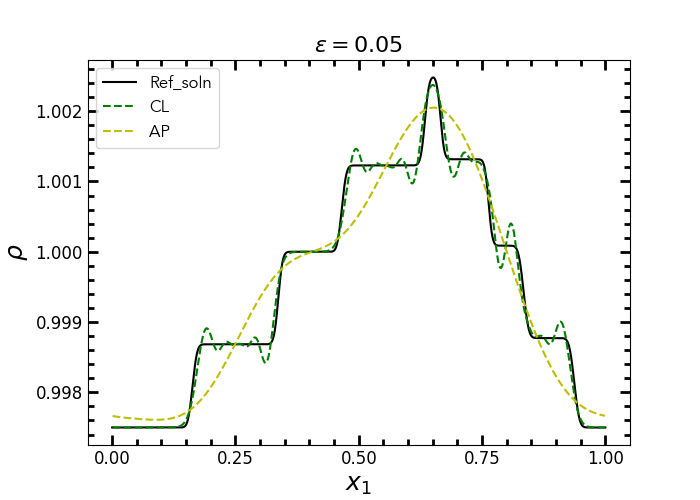}
  \includegraphics[height=0.26\textheight]{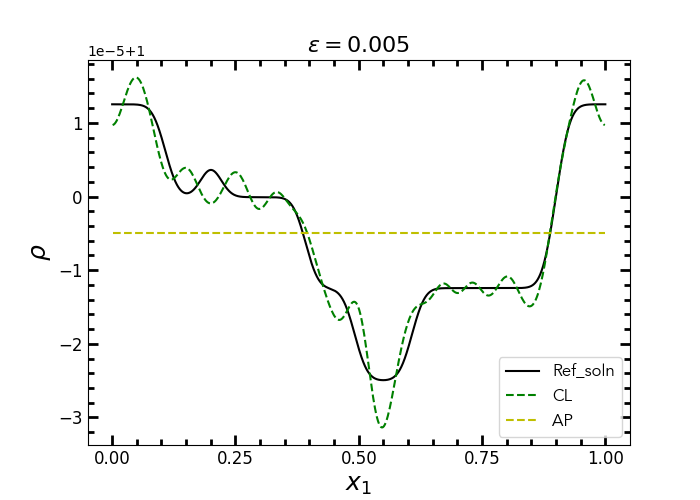} \\
  \includegraphics[height=0.26\textheight]{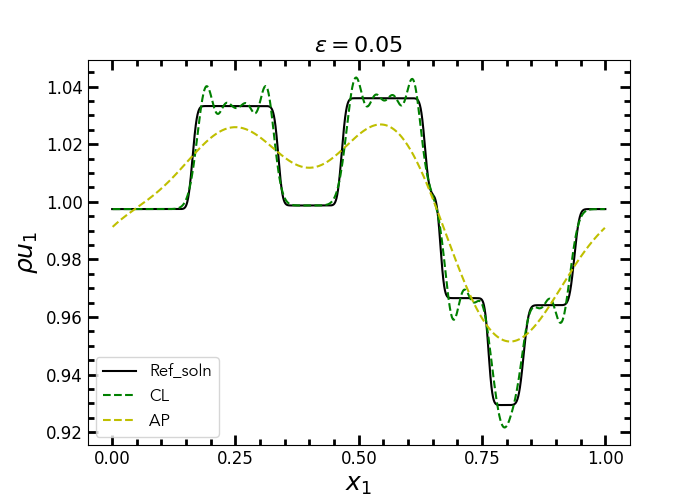}
  \includegraphics[height=0.26\textheight]{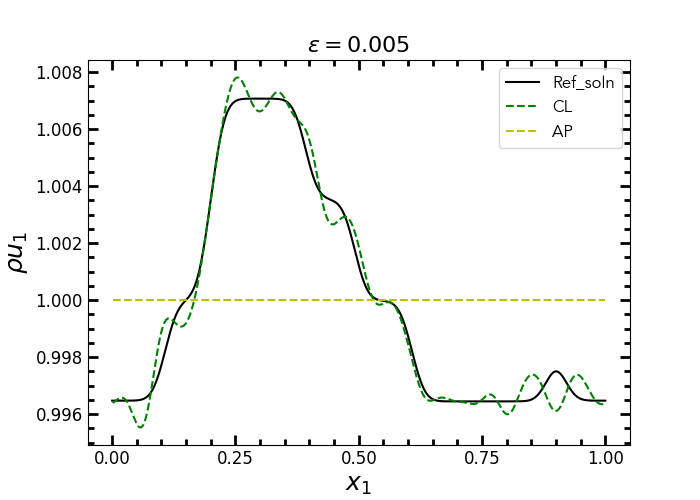} 
\caption{1D Riemann Problem: Density and momentum profiles at time T = 0.05, in the weakly compressible and incompressible regime i.e. $\veps = 0.05 \mbox{ and } 0.005$, respectively, for the AP and CL scheme for N = 200}  
  \label{fig:rp_incomp}
\end{figure}

From both the figures presented for this test case we conclude that the AP scheme performs well in both the compressible as well as the incompressible regimes. We must note that in the weakly compressible regime the proclivity of the CL scheme to resolve sub-scale structures of the solution is essentially due to severe stability restrictions, without which the scheme may blow up. Whereas, the AP scheme can stay stable with a CFL independent of the Mach number and therefore needn't resolve the micro-scale structures and focuses on the macroscopic features of the solution for long time simulations.
\subsection{Colliding Pulses}
\label{ssec:cold_pulse}
We consider a smooth one-dimensional problem from \cite{DT11, kle95}, in which the evolution of two colliding acoustic waves is simulated. The isentropic constant $\gamma = 1.4$ and the initial data in-terms of the conservative variables is given as,
\begin{equation}
\begin{aligned}
\rho (0, x_1) &= 0.955 + \frac{\veps}{2} (1  - \cos (2 \pi x_1)) \\
q (0, x_1) &= - \mbox{sign} (x_1) \sqrt{\gamma} (1  - \cos (2 \pi x_1)).
\end{aligned}
\end{equation}
The simulation is carried out in a periodic domain ranging in $x_1 \in [-1,1]$ for the reference Mach number $\veps = 0.1$. Reference solutions are computed using a first order classical scheme (Fully Explicit) \cite{DT11}, with a very fine mesh of $N = 10000$ mesh points, upto several final times $T = 0.01, 0.02, 0.04, 0.06 \mbox{ and } 0.08$. The AP scheme is subjected to a mesh of $N = 200$ mesh points and a CFL = 0.23. 

Figure~\ref{fig:cold_puls_den} shows the plots of the initial density profile and density plotted at various different times against the reference solution and Figure~\ref{fig:cold_puls_mom} shows the same plots for the momentum. The plots show that the linearly implicit IMEX scheme developed in this paper is able to simulate the evolution of the pulses i.e. their collision, their superimposition (at T = 0.04, giving rise to a maximum in density) and finally their separation.
\begin{figure}[htbp]
  \centering
  \includegraphics[height=0.17\textheight]{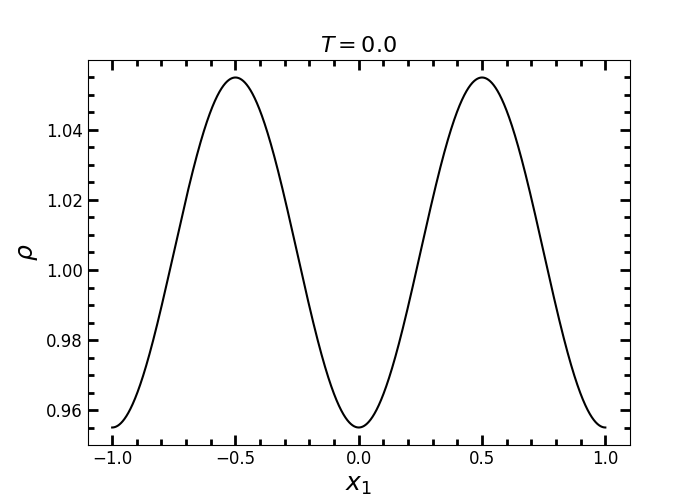}
  \includegraphics[height=0.17\textheight]{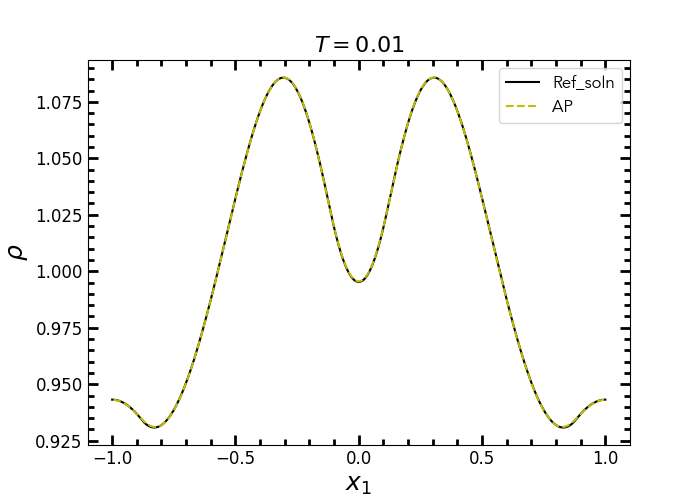}
  \includegraphics[height=0.17\textheight]{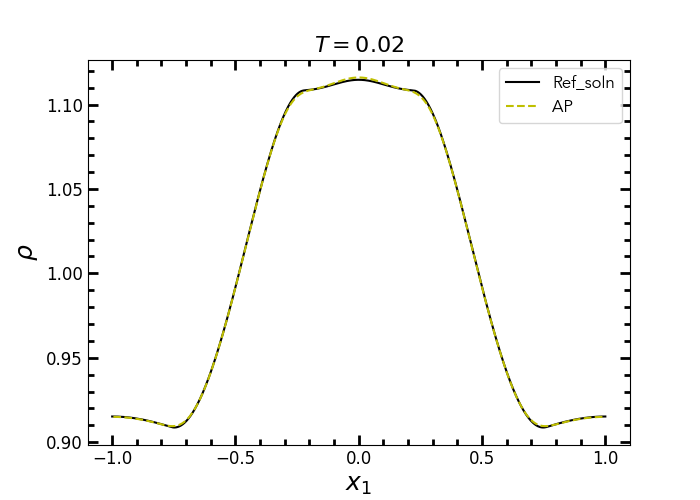} \\
  \includegraphics[height=0.17\textheight]{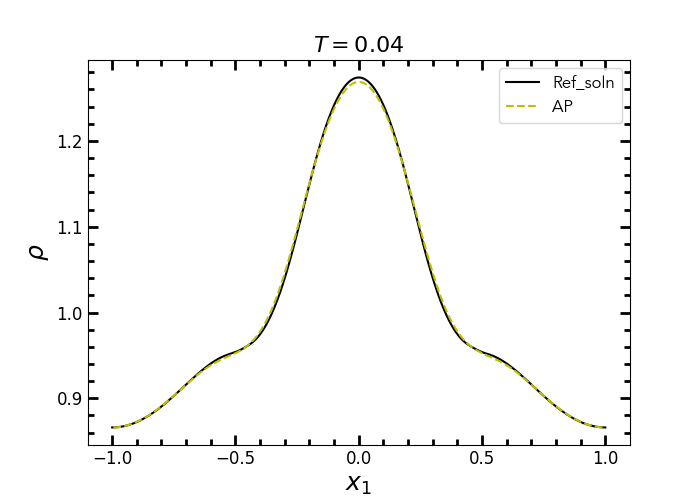}
  \includegraphics[height=0.17\textheight]{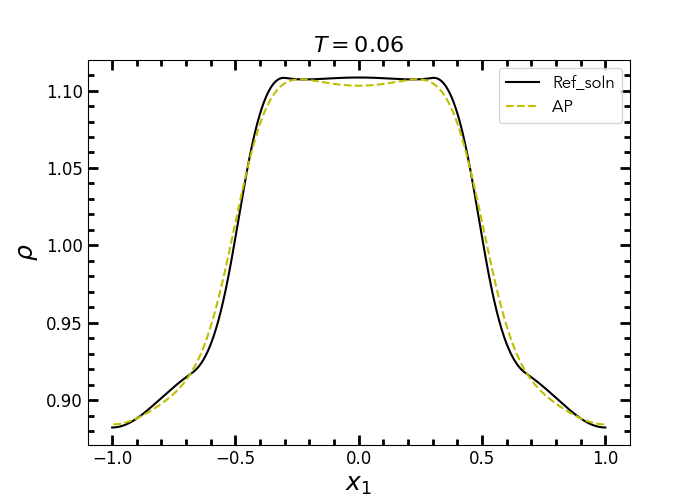}
  \includegraphics[height=0.17\textheight]{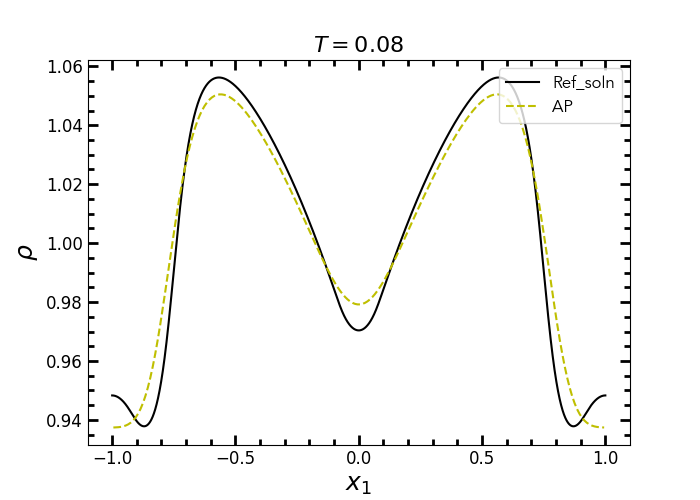} \\
  \caption{Colliding Pulses: Plots of density profiles for the AP scheme at time various times for $\veps = 0.1$, N = 200}  
  \label{fig:cold_puls_den}
\end{figure}
\begin{figure}[htbp]
  \centering
  \includegraphics[height=0.17\textheight]{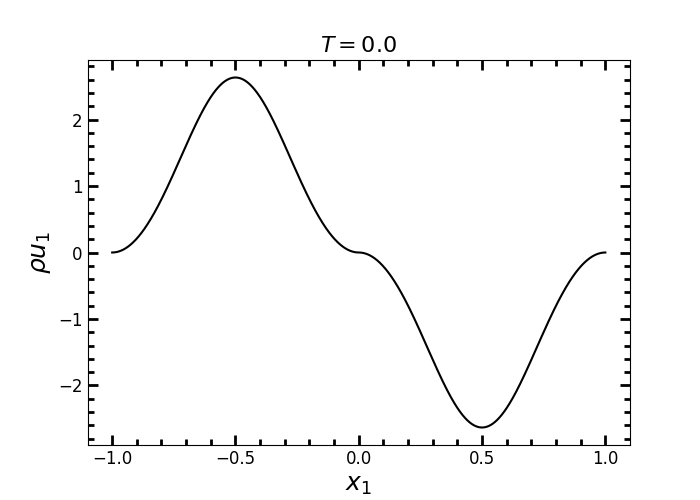}
  \includegraphics[height=0.17\textheight]{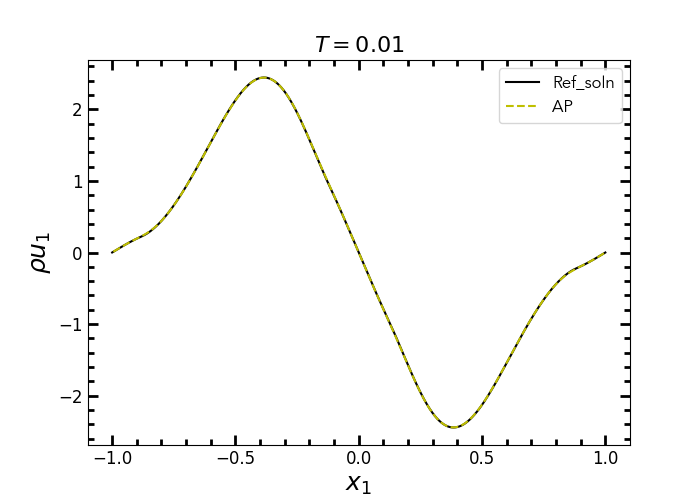}
  \includegraphics[height=0.17\textheight]{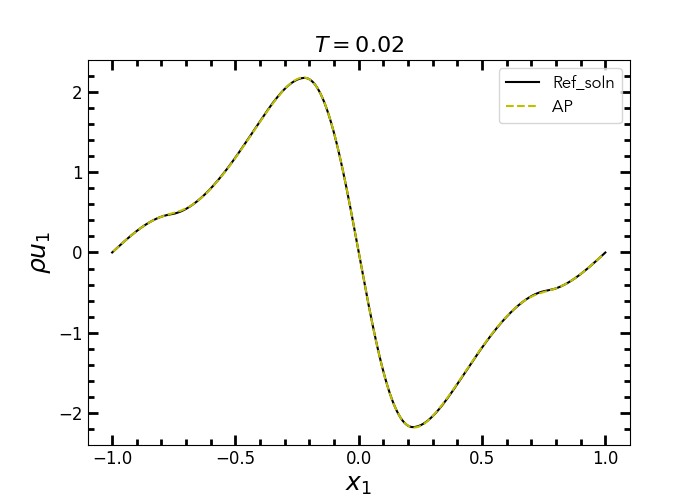} \\
  \includegraphics[height=0.17\textheight]{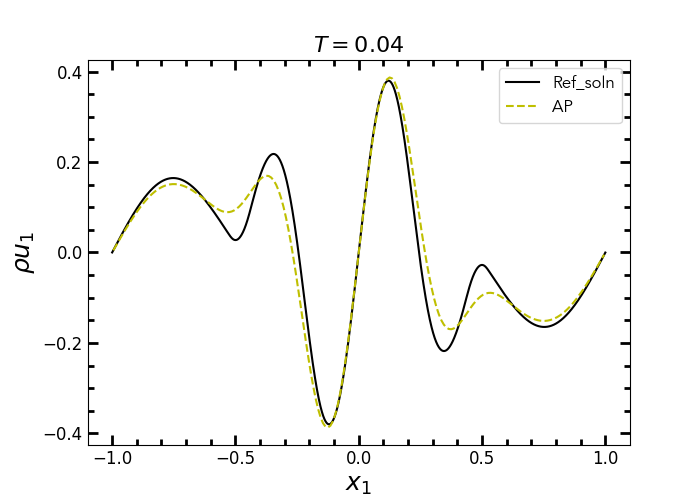}
  \includegraphics[height=0.17\textheight]{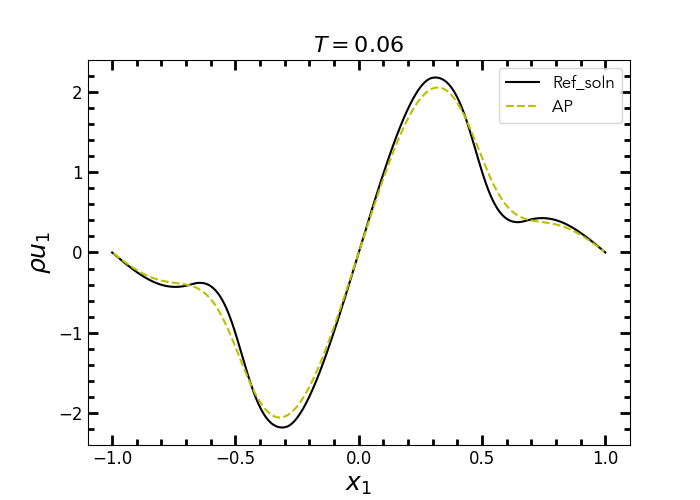}
  \includegraphics[height=0.17\textheight]{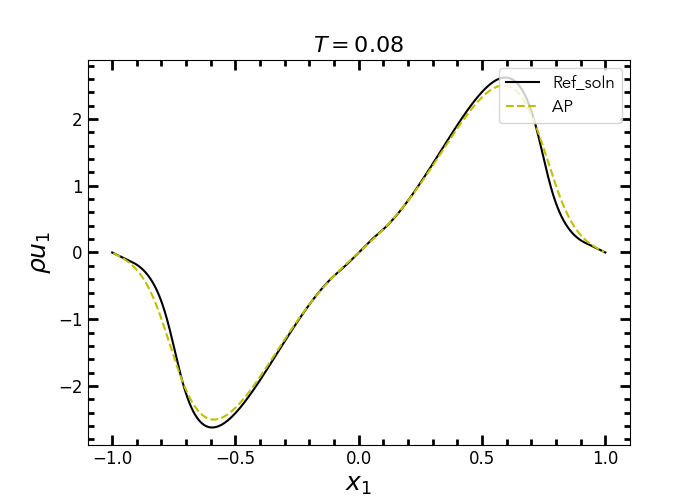} \\
  \caption{Colliding Pulses: Plots of momentum profiles for the AP scheme at time various times for $\veps = 0.1$, N = 200}  
  \label{fig:cold_puls_mom}
\end{figure}
\subsection{Experimental Order of Convergence}
\label{sec:vortex}
A travelling vortex problem was considered in \cite{BMY17} for the Euler equations with adiabatic equation of state. It was appropriated for the isentropic Euler equations in \cite{AS20}. Here, we consider a small modification of the same problem. The initial conditions are stated as:  
 \begin{equation*}
  \begin{aligned}
    \rho(0,\uu{x}) &= 1.0 + \left(\frac{\Gamma \eta }{\omega} \right)^2 \left(
      k( \omega r ) - k (\pi) \right) \chi_{\omega r \leq \pi} , \\
    u_1(0, \uu{x}) &= \bar{u} + \Gamma ( 1 + \cos (\omega r)) (0.5 - x_2)
    \chi_{\omega r \leq \pi}, \\
    u_2 (0, \uu{x}) &= \Gamma ( 1 + \cos (\omega r)) (x _1 - 0.5)
    \chi_{\omega r \leq \pi},
  \end{aligned}
\end{equation*}
where $r = \norm{\uu{x} - (0.5, 0.5)}$, $\Gamma = 1.5$, $\omega = 4 \pi$,
and $k(r) = 2 \cos r + 2r \sin r + \frac{1}{8}\cos(2r) +
\frac{1}{4}r\sin(2r) + \frac{3}{4} r^2$. The EOS is $p(\rho) = \frac{\rho^2}{2}$.
Here, $\Gamma$ is a parameter
known as the vortex intensity, $r$ denotes the distance from the core
of the vortex, and $\omega$ is an angular wave frequency specifying
the width of the vortex. The Mach number $\veps$ is controlled by
adjusting the value of $\eta$ via the relation $\veps = {0.6\eta}/{\sqrt{110}}$. 
$\bar{u}$ is the speed with which the vortex is advected. 

The above problem admits an exact solution 
$(\rho,u_1,u_2)(t,x_1,x_2)=(\rho,u_1,u_2)(0,x_1- \bar{u}t,x_2)$. The
computational domain $\Omega = [0,1] \times [0,1]$ is
successively divided to $10\times 10$, $20\times 20$, up to $80\times
80$ square mesh cells. All the four boundaries are set to be
periodic. The computations are performed up to a time $T=0.1$ using
the second order DP2-A(2,4,2) (Figure~\ref{fig:imexDP}) scheme. 
The CFL number is set at $0.45$. The EOC computed in $L^2$ norms using the above
exact solution, for $\veps$ ranging in
$\{10^{-1},\ldots,10^{-6}\}$, are given in Tables~\ref{tab:eocepsm1}-\ref{tab:eocepsm6}. The tables clearly show uniform second order convergence of the scheme with respect to
$\veps$.  
\begin{table}[htbp]
  \centering
  \begin{tabular}[htbp]{||c|c|c|c|c||}
    \hline
    \hline
    $N$ & $L^2$ error in $u_1$& EOC & $L^2$ error in $u_2$ & EOC \\  
    \hline
 	10 & 8.6471e-03 &   		& 1.7036e-02 &		 \\
      \hline
    20 & 2.7125e-03 &  1.6725 	& 5.8054e-03 & 1.5531 \\
      \hline
    40 & 6.3922e-04 & 2.0852 & 1.4085e-03 & 2.0431 \\
    \hline
    80 & 2.3855e-04 & 1.4220 & 3.8552e-04 & 1.8693  \\
    \hline
    \hline
  \end{tabular}
  \caption{ $L^2$ errors in $u_1$, $u_2$, and EOC for
    Problem~\ref{sec:vortex} corresponding to $\veps=10^{-1}$ and $\bar{u} = 0.6$.}
  \label{tab:eocepsm1}
\end{table}  
\begin{table}[htbp]
  \centering
  \begin{tabular}[htbp]{||c|c|c|c|c||}
    \hline
    \hline
    $N$ & $L^2$ error in $u_1$& EOC & $L^2$ error in $u_2$ & EOC \\  
    \hline
    10 & 8.6541e-03 &   		& 1.7042e-02 &		 \\
    \hline
	20 & 2.7233e-03 & 1.6680 & 5.8248e-03 & 1.5488 \\
    \hline
    40 & 6.4156e-04 & 2.0857 & 1.4235e-03 & 2.0327 \\
    \hline
    80 & 1.8041e-04 & 1.8302 & 3.6666e-04 & 1.9568 \\
    \hline
    \hline
  \end{tabular}
  \caption{ $L^2$ errors in $u_1$, $u_2$, and EOC for
    Problem~\ref{sec:vortex} corresponding to $\veps=10^{-2}$ and $\bar{u} = 0.6$.}
  \label{tab:eocepsm2}
\end{table}
\begin{table}[htbp]
  \centering
  \begin{tabular}[htbp]{||c|c|c|c|c||}
    \hline
    \hline
    $N$ & $L^2$ error in $u_1$& EOC & $L^2$ error in $u_2$ &EOC \\  
    \hline

    10 & 8.6542e-03 &   		& 1.7042e-02 &		 \\
 	\hline
	20 & 2.7235e-03 & 1.6679 & 5.8251e-03 & 1.5487 \\  
    \hline
    40 & 6.4050e-04 & 2.0881 & 1.4236e-03 & 2.0326 \\
    \hline
    80 & 1.8060e-04 & 1.8263 & 3.6700e-04 & 1.9557  \\
    \hline
    \hline
  \end{tabular}
  \caption{ $L^2$ errors in $u_1$, $u_2$, and EOC for
    Problem~\ref{sec:vortex} corresponding to $\veps=10^{-3}$ and $\bar{u} = 0.6$.}
  \label{tab:eocepsm3}
\end{table}
\begin{table}[htbp]
  \centering
  \begin{tabular}[htbp]{||c|c|c|c|c||}
    \hline
    \hline
    $N$ & $L^2$ error in $u_1$& EOC & $L^2$ error in $u_2$ & EOC \\  
    \hline

    10 & 8.6521e-03 &   		& 1.7045e-03 &		 \\
    \hline
	20 & 2.7250e-03 & 1.6667 & 5.8264e-03 & 1.5486 \\
    \hline
    40 & 6.4157e-04 & 2.0865 & 1.4239e-03 & 2.0326 \\
    \hline
    80 & 1.8108e-04 & 1.8249 & 3.6707e-04 & 1.9557  \\
    \hline
    \hline
  \end{tabular}
  \caption{ $L^2$ errors in $u_1$, $u_2$, and EOC for
    Problem~\ref{sec:vortex} corresponding to $\veps=10^{-4}$ and $\bar{u} = 0.6$.}
  \label{tab:eocepsm4}
\end{table}
\begin{table}[htbp]
  \centering
  \begin{tabular}[htbp]{||c|c|c|c|c||}
    \hline
    \hline
    $N$ & $L^2$ error in $u_1$& EOC & $L^2$ error in $u_2$ & EOC \\  
    \hline

    10 & 8.6414e-03 &   		& 1.7094e-02 &		 \\
    \hline
	20 & 2.7470e-03 & 1.6533 & 5.8352e-03 & 1.5506 \\
    \hline
    40 & 6.4457e-04 & 2.0914 & 1.4247e-03 & 2.0340 \\
    \hline
    80 & 1.8141e-04 & 1.8290 & 3.6713e-04 & 1.9563  \\
    \hline
    \hline
  \end{tabular}
  \caption{ $L^2$ errors in $u_1$, $u_2$, and EOC for
    Problem~\ref{sec:vortex} corresponding to $\veps=10^{-5}$ and $\bar{u} = 0.6$.}
  \label{tab:eocepsm5}
\end{table}
\begin{table}[htbp]
  \centering
  \begin{tabular}[htbp]{||c|c|c|c|c||}
    \hline
    \hline
    $N$ & $L^2$ error in $u_1$& EOC & $L^2$ error in $u_2$ & EOC \\  
    \hline
	10 & 8.6462e-03 &   		& 1.7099e-02 &		 \\
    \hline
	20 & 2.7577e-03 & 1.6485 & 5.8361e-03 & 1.5509 \\
    \hline
    40 & 6.5303e-04 & 2.0782 & 1.4269e-03 & 2.0321 \\
    \hline
   80 & 2.4095e-04 & 1.4384 & 3.7534e-04 & 1.9266  \\
    \hline
    \hline
  \end{tabular}
  \caption{ $L^2$ errors in $u_1$, $u_2$, and EOC for
    Problem~\ref{sec:vortex} corresponding to $\veps=10^{-6}$ and $\bar{u} = 0.6$.}
  \label{tab:eocepsm6}
\end{table}

Figures~\ref{fig:pcolorploteps0p1}, \ref{fig:pcolorploteps0p001}, and  \ref{fig:pcolorploteps0p00001} show the pcolor plots of density, $x_1$-velocity and $x_2$-velocity for $\veps = 10^{-1}$, $\veps = 10^{-3}$ and $\veps = 10^{-5}$, respectively, with 10 contour levels. The top panel in each of the figures shows the initial data at time $T = 0$ and the bottom panel shows the numerical solutions at after one period i.e. $T= 5.01$ (the time taken by the vortex to come back to the initial position for $\bar{u} = 0.2$), on  a coarse $60 \times 60$ mesh. We observe that the solution profiles for all three values of the reference Mach number are similar and the vortex shape is maintained accurately with some small amount of diffusion, even on such a coarse mesh. What has to be noted is in spite of the mesh size being same across all the three values of $\veps$, the solution profiles are qualitatively similar. This is in alignment with the conclusion that can be drawn from the EOC tables which establish that the scheme achieves second order convergence (quantitative), uniformly with respect to $\veps$. Thus, the results of this experiment demonstrate the capability of the AP scheme to make the numerical mesh agnostic to the singular perturbation parameter. 
 \begin{figure}[htbp]
  \centering
  \includegraphics[height=0.18\textheight]{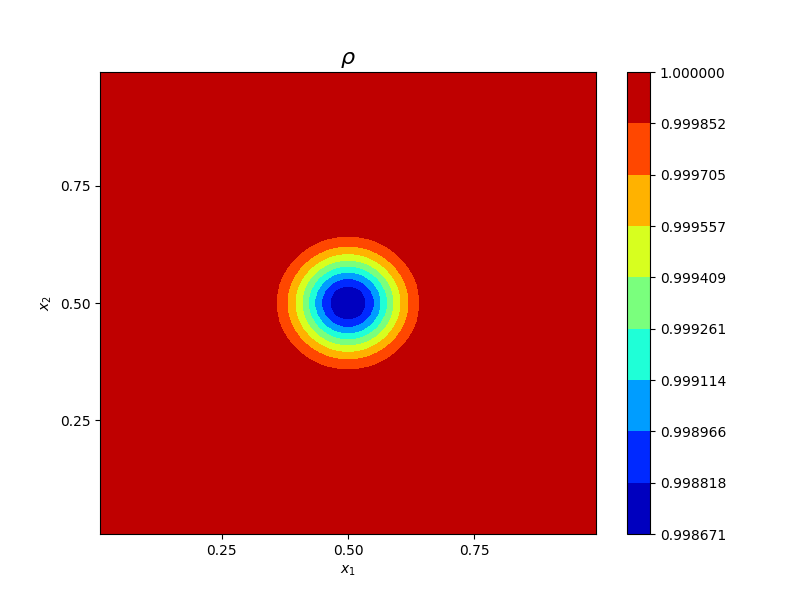} 
  \includegraphics[height=0.18\textheight]{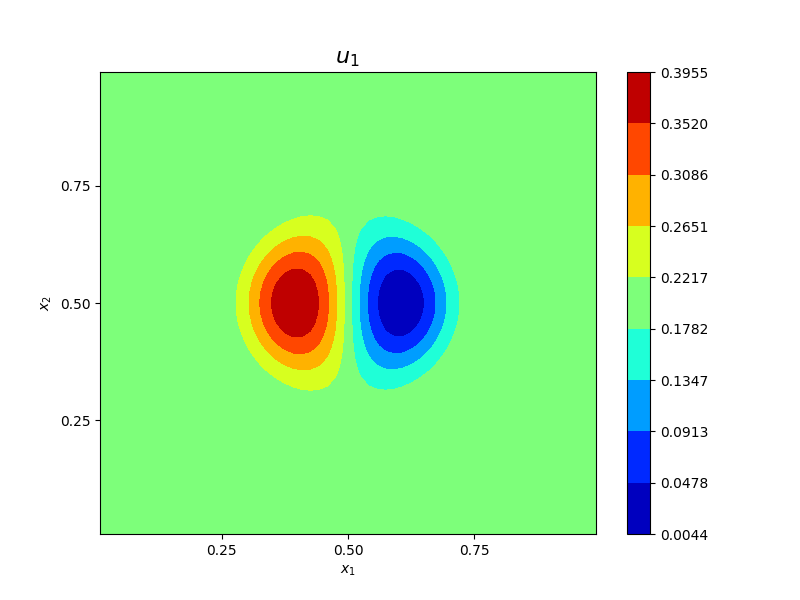}
  \includegraphics[height=0.18\textheight]{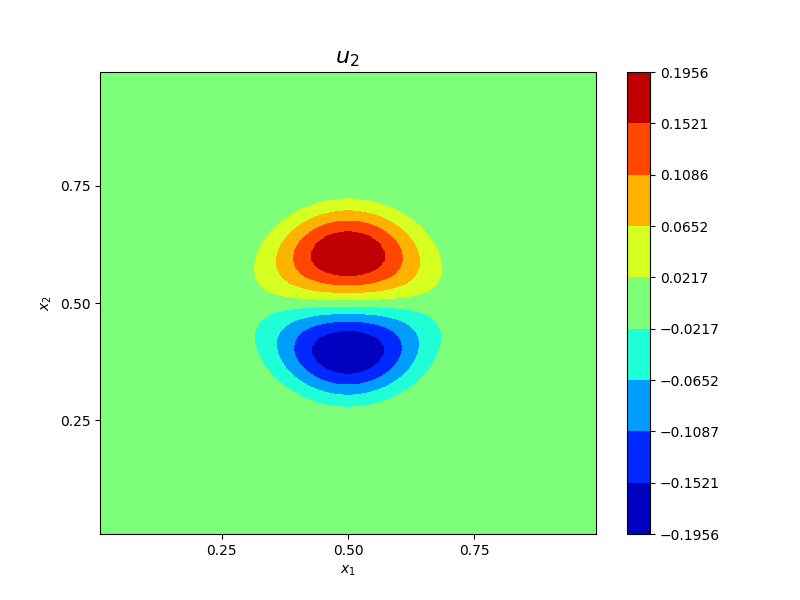} \\
  \includegraphics[height=0.18\textheight]{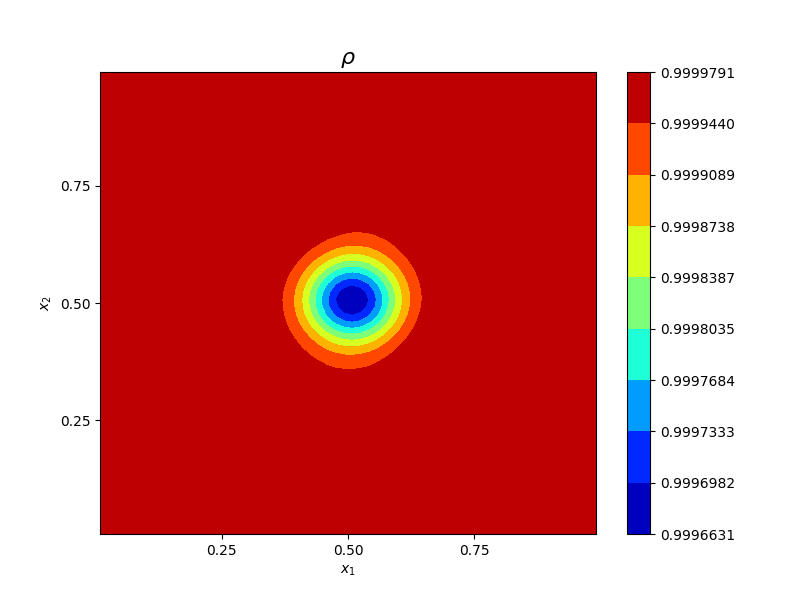} 
  \includegraphics[height=0.18\textheight]{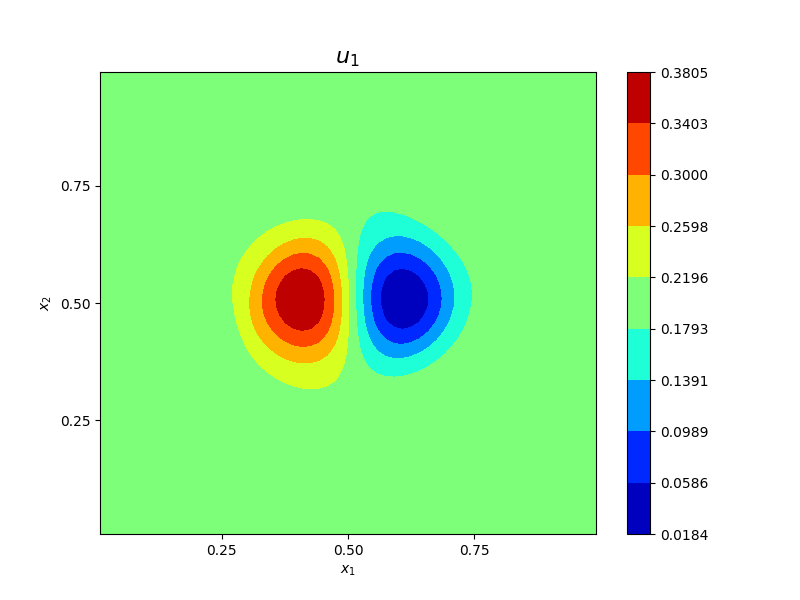}
  \includegraphics[height=0.18\textheight]{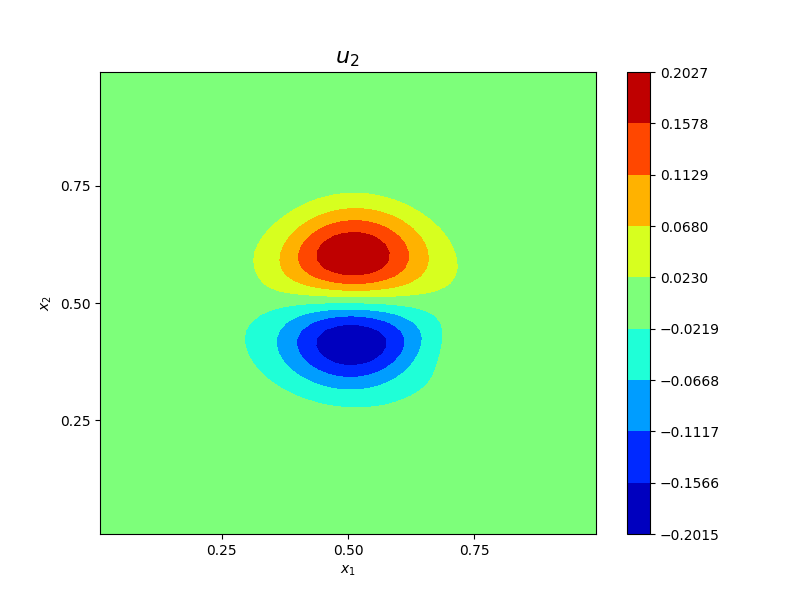}
\caption{Pcolor plots of density, $x_1$-velocity, $x_2$-velocity, Top: at time T = 0.0, Bottom: at time T = 5.01, for $\veps = 10^{-1}$ and $\bar{u} = 0.2$ on a 60 X 60 mesh}  
  \label{fig:pcolorploteps0p1}
\end{figure} 
\begin{figure}[htbp]
  \centering
   \includegraphics[height=0.18\textheight]{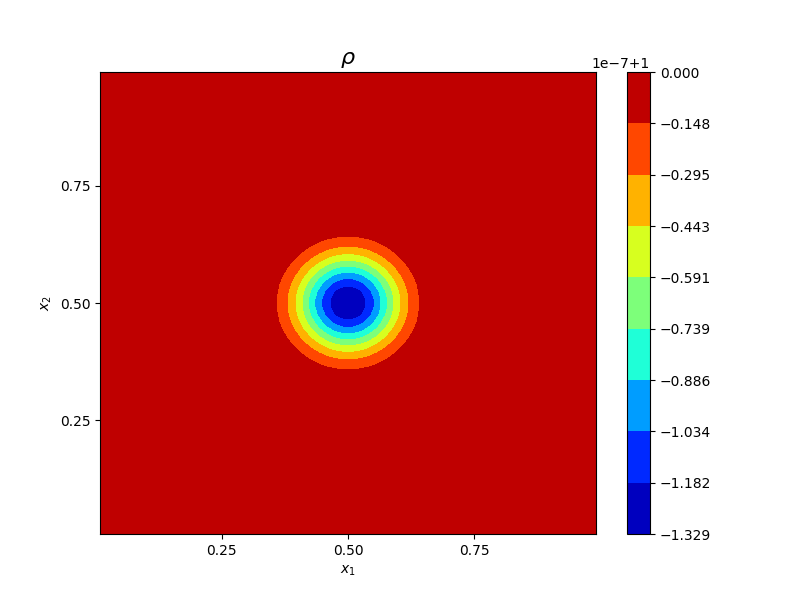} 
  \includegraphics[height=0.18\textheight]{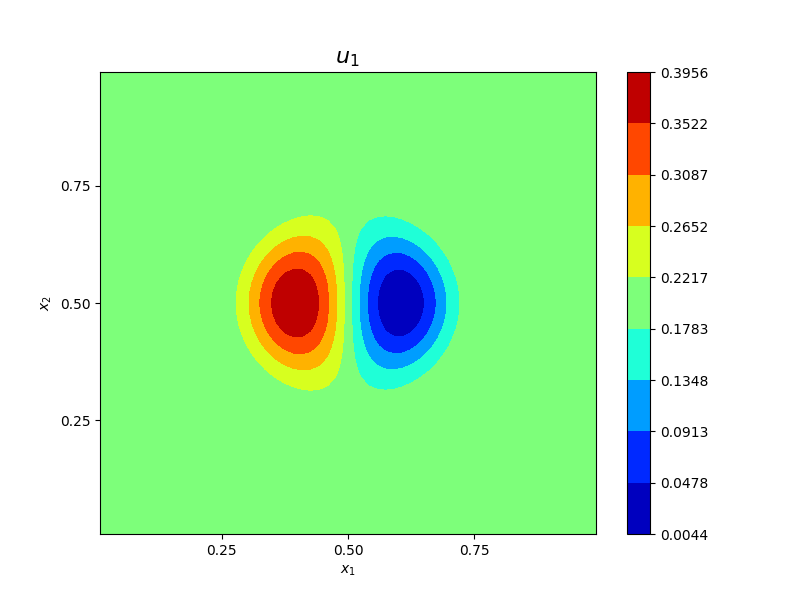}
  \includegraphics[height=0.18\textheight]{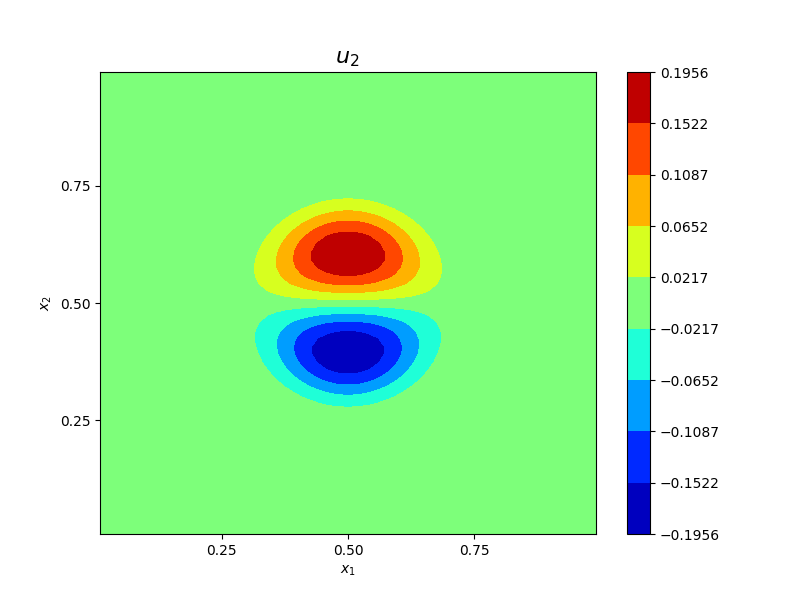} \\
  \includegraphics[height=0.18\textheight]{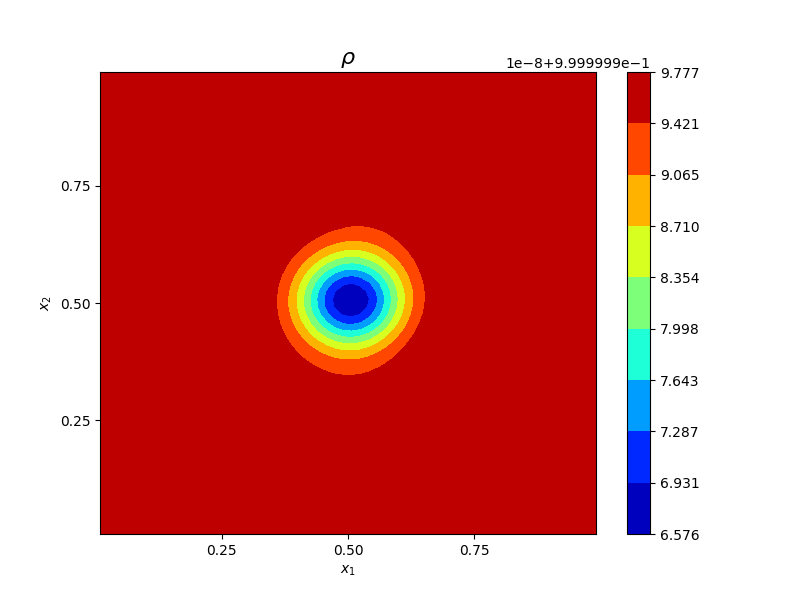} 
  \includegraphics[height=0.18\textheight]{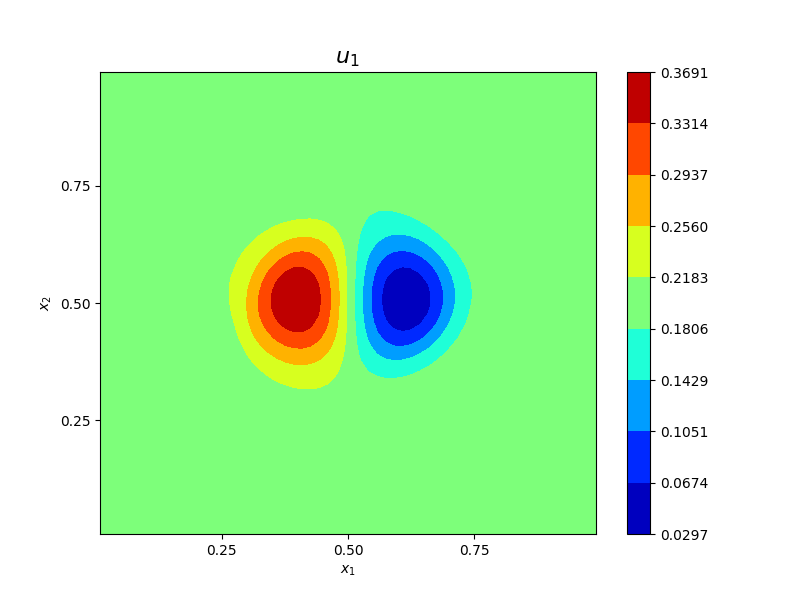}
  \includegraphics[height=0.18\textheight]{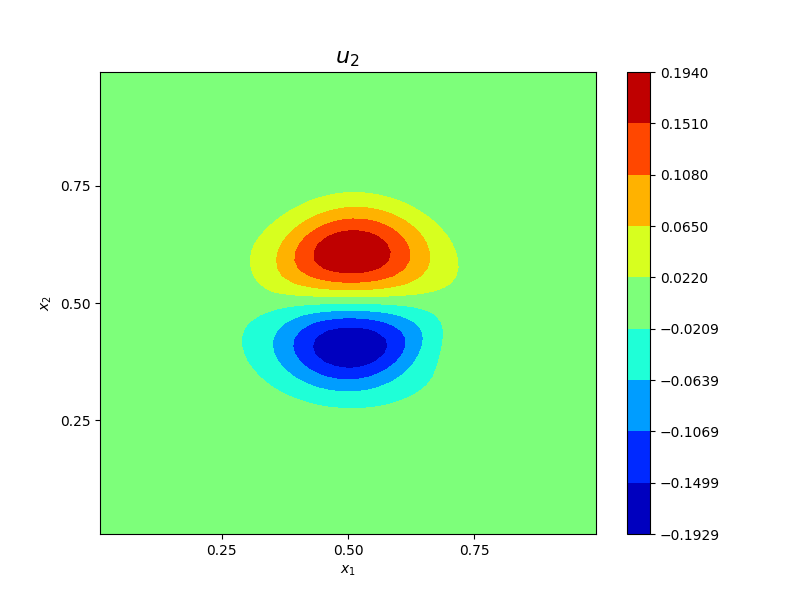}
\caption{Pcolor plots of density, $x_1$-velocity, $x_2$-velocity, Top: at time T = 0.0, Bottom: at time T = 5.01, for $\veps = 10^{-3}$ and $\bar{u} = 0.2$ on a 60 X 60 mesh}  
  \label{fig:pcolorploteps0p001}
\end{figure} 
\begin{figure}[htbp]
  \centering
    \includegraphics[height=0.18\textheight]{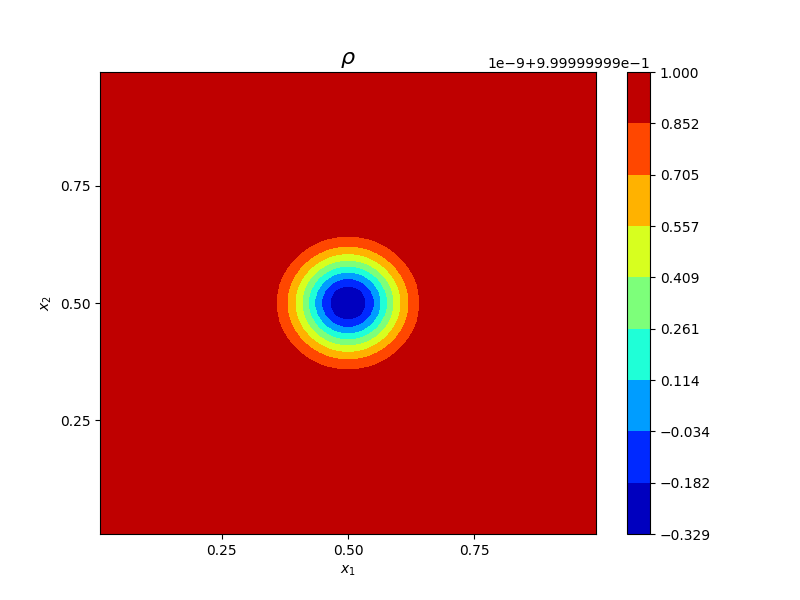} 
  \includegraphics[height=0.18\textheight]{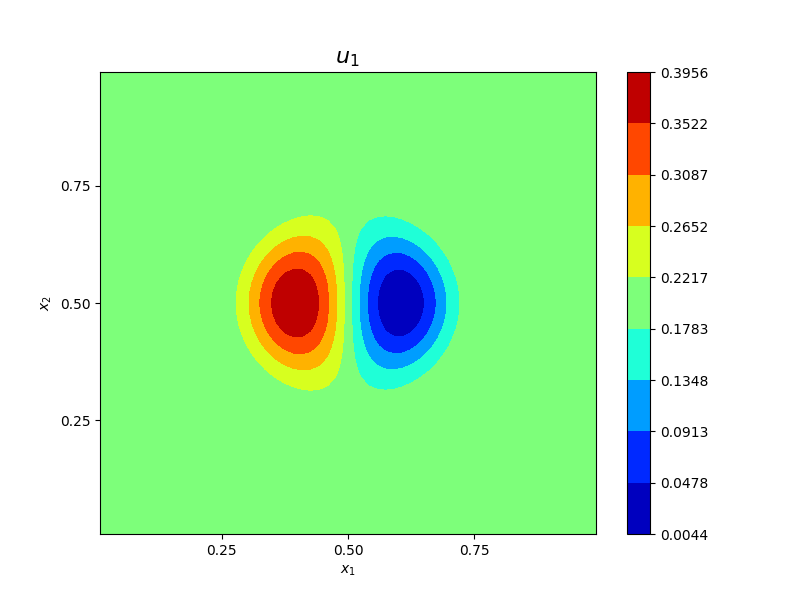}
  \includegraphics[height=0.18\textheight]{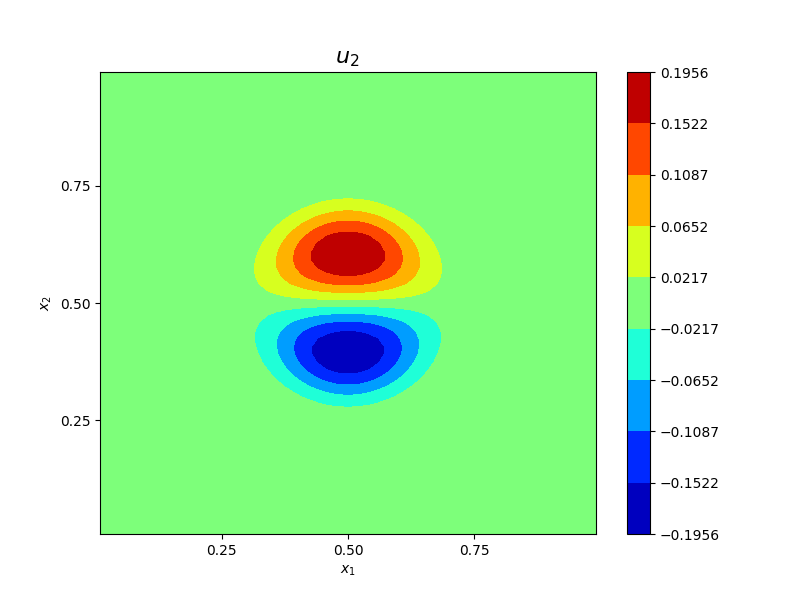} \\
  \includegraphics[height=0.18\textheight]{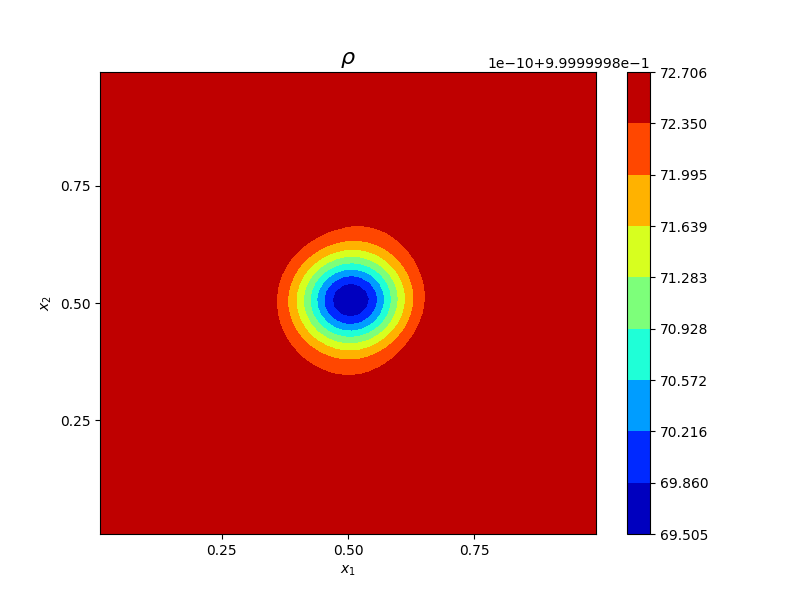} 
  \includegraphics[height=0.18\textheight]{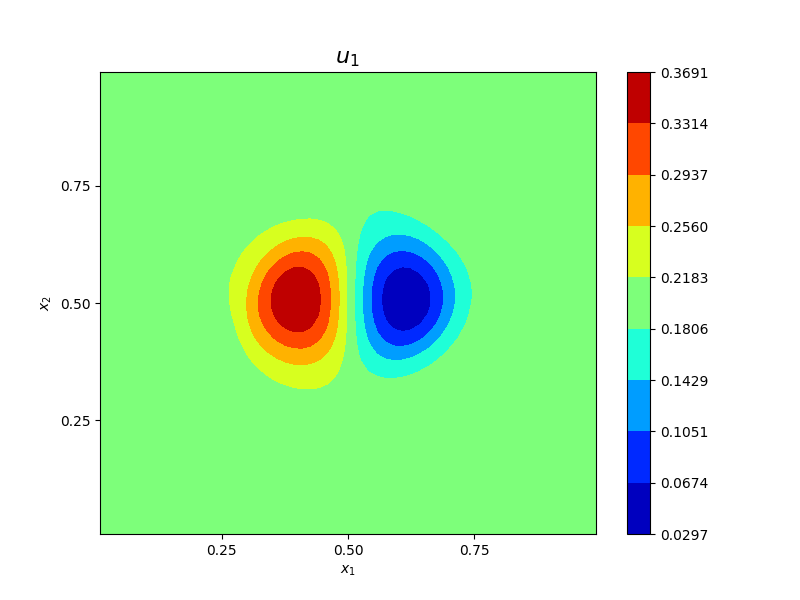}
  \includegraphics[height=0.18\textheight]{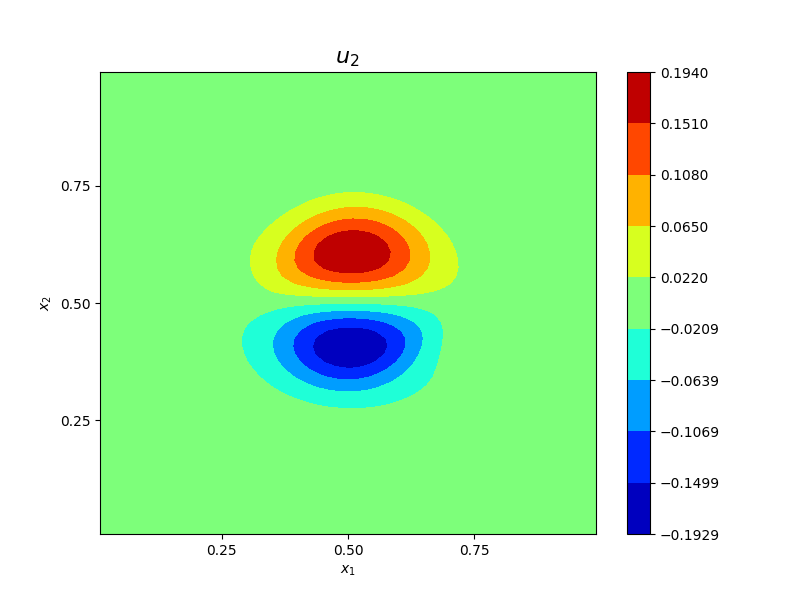}
\caption{Pcolor plots of density, $x_1$-velocity, $x_2$-velocity, Top: at time T = 0.0, Bottom: at time T = 5.01, for $\veps = 10^{-5}$ and $\bar{u} = 0.2$ on a 60 X 60 mesh}  
  \label{fig:pcolorploteps0p00001}
\end{figure} 
\begin{figure}[htbp]
  \centering
  \includegraphics[height=0.3\textheight]{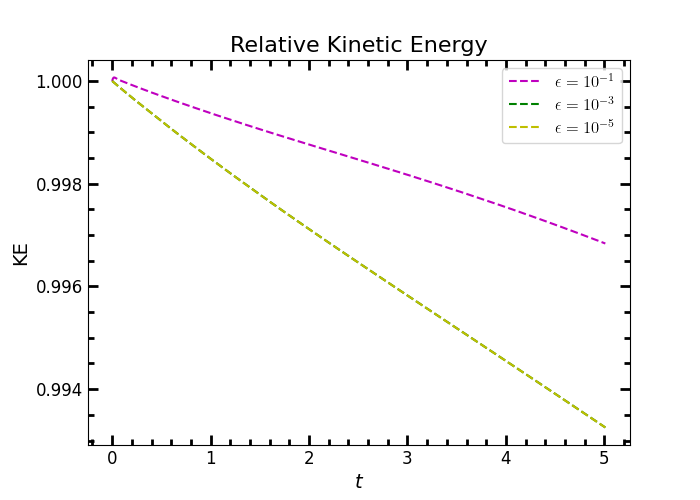} 
\caption{Relative Kinetic energy dissipation plot for the vortex problem}  
  \label{fig:ke}
\end{figure}

Figure~\ref{fig:ke} shows the kinetic energy dissipation plots for varied values of $\veps$, over a long period of time $t = 5.01$. We observe that the difference in dissipation between $\veps= 10^{-1}$ and $\veps= 10^{-3}$, and $\veps= 10^{-5}$ is only at the third  decimal place, at maximum. Moreover, the numerical scheme is capable to maintain almost a constant kinetic energy as the dissipation is of the order of $\mcal{O}(10^{-3})$, which is minuscule.
\subsection{Asymptotic Order of Convergence}
\label{sec:aoc}
The aim of this case study is to demonstrate the ability of the AP 
scheme to converge to the asymptotic solution (the incompressible 
solution) without loosing accuracy i.e. asymptotic
convergence of the scheme. Here we show that the numerical solution converges
to the incompressible solution as $\veps\to0$, and the convergence rate is two.

To this end we consider the following exact solution:
\begin{equation}
  \label{eq:schneider}
  \begin{aligned}
    u_{1,(0)}(t, x_1, x_2)&=1-2\cos(2\pi(x_1 - t))\sin(2\pi(x_2-t)), \\
    u_{2,(0)}(t, x_1, x_2)&=1+2\sin(2\pi(x_1 - t))\cos(2\pi(x_2-t)), \\
    p_{(2)}(t, x_1, x_2) &=-\cos(4\pi(x_1-t))-\cos(4\pi(x_2-t))    
  \end{aligned}
\end{equation}
of the incompressible system \eqref{eq:lm_vel_lim}-\eqref{eq:lm_div_lim} as given in
\cite{SBG+99}, with $\rho_{(0)}(t, x_1, x_2)=1$. The computational
domain $\Omega=[0,1]\times[0,1]$ is divided into
$10\times10,20\times20,40\times40,80\times80$ mesh cells and the CFL
number is set to $0.45$. The data are initialised using \eqref{eq:schneider}
at $t=0$ and the computations are carried out up to a final time $t=3$. The
boundaries are periodic everywhere. The EOC computed using the exact
solution \eqref{eq:schneider} as the reference solution is termed as the
asymptotic order of convergence (AOC).

Tables~\ref{tab:aocepsm4}-\ref{tab:aocepsm6} clearly show the uniform second
order convergence of the IMEX scheme in the incompressible regime, $\veps \in
\{10^{-4}, 10^{-5}, 10^{-6} \}$. Figure~\ref{fig:den_dvg_contf} shows the density
and velocity profiles at time $t=0$ and at a late time $t = 3$ (on a coarse mesh),
for $\veps = 10^{-4}$. The velocity profiles are identical at both the times and 
the density remains very close to the incompressible space, 
$||\rho - 1|| = \mcal{O}(\veps^2)$, at $t = 3$. This illustrates the long time 
stability and asymptotic behaviour of the linear IMEX scheme.
\begin{table}[htbp]
  \centering
  \begin{tabular}[htbp]{||c|c|c|c|c||}
    \hline
    \hline
    $N$ & $L^2$ error in $u_1$& AOC & $L^2$ error in $u_2$ & AOC \\  
    \hline
    20 & 2.5906e-01 &  & 3.0088e-01 &  \\
    \hline
    40 & 9.6272e-02 & 1.4281 & 1.0303e-01 & 1.5461 \\
    \hline
    80 & 2.4239e-02 & 1.9897 & 2.5212e-02 & 2.0309  \\
    \hline
	160 & 5.4608e-03 & 2.1501 & 5.5656e-03 & 2.1794	 \\
    \hline
    \hline
  \end{tabular}
  \caption{ $L^2$ errors in $u_1$, $u_2$, and AOC for
    Problem~\ref{sec:aoc} corresponding to $\veps=10^{-4}$.}
  \label{tab:aocepsm4}
\end{table}
\begin{table}[htbp]
  \centering
  \begin{tabular}[htbp]{||c|c|c|c|c||}
    \hline
    \hline
    $N$ & $L^2$ error in $u_1$& AOC & $L^2$ error in $u_2$ & AOC \\  
    \hline
    20 & 2.5907e-01 &   		& 3.0089e-01 &		 \\
    \hline
	40 & 9.6269e-02 & 1.4282 & 1.0303e-01 & 1.5460 \\
    \hline
    80 & 2.4389e-02 & 1.9808 & 2.5381e-02 & 2.0213 \\
    \hline
    160 & 5.7802e-03 & 2.0770 & 5.9376e-03 & 2.0957  \\
    \hline
    \hline
  \end{tabular}
  \caption{ $L^2$ errors in $u_1$, $u_2$, and AOC for
    Problem~\ref{sec:aoc} corresponding to $\veps=10^{-5}$.}
  \label{tab:aocepsm5}
\end{table}
\begin{table}[htbp]
  \centering
  \begin{tabular}[htbp]{||c|c|c|c|c||}
    \hline
    \hline
    $N$ & $L^2$ error in $u_1$& AOC & $L^2$ error in $u_2$ & AOC \\  
    \hline
	20 & 2.5931e-01 &   		& 3.0089e-01 &		 \\
    \hline
	40 & 9.7611e-02 & 1.4095 & 1.0401e-01 & 1.5324 \\
    \hline
    80 & 2.8567e-02 & 1.7726 & 2.9701e-02 & 1.8081 \\
    \hline
    160 & 7.6184e-03 & 1.9068 & 7.0096e-03 & 2.0831 \\
    \hline
    \hline
  \end{tabular}
  \caption{ $L^2$ errors in $u_1$, $u_2$, and AOC for
    Problem~\ref{sec:aoc} corresponding to $\veps=10^{-6}$.}
  \label{tab:aocepsm6}
\end{table}
\begin{figure}[htbp]
  \centering
  \includegraphics[height=0.25\textheight]{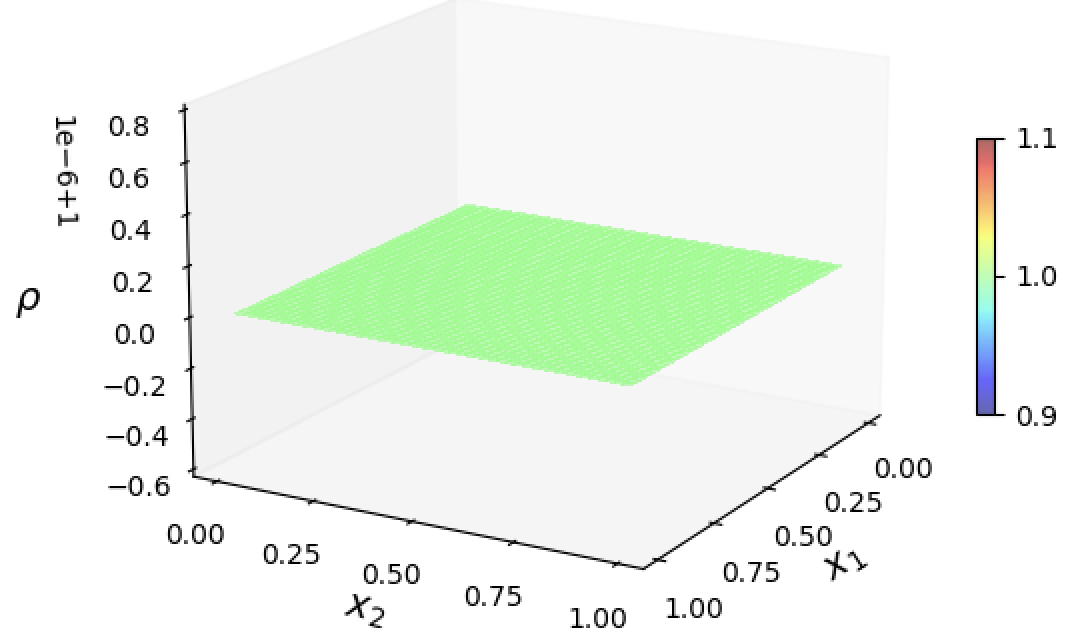}
  \includegraphics[height=0.25\textheight]{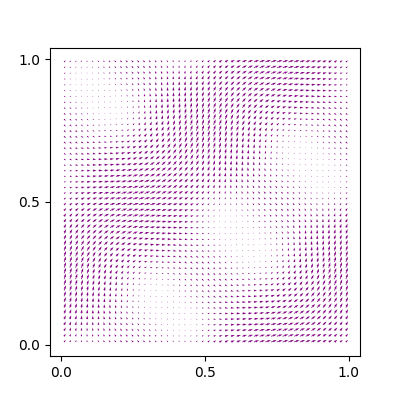} \\
    \includegraphics[height=0.265\textheight]{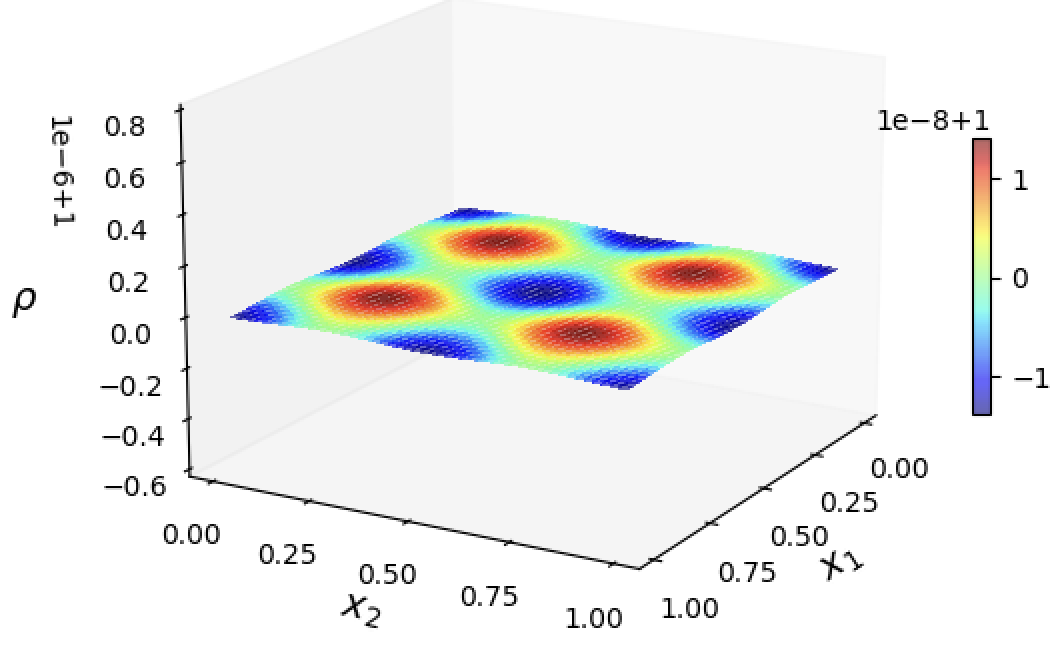}
  \includegraphics[height=0.25\textheight]{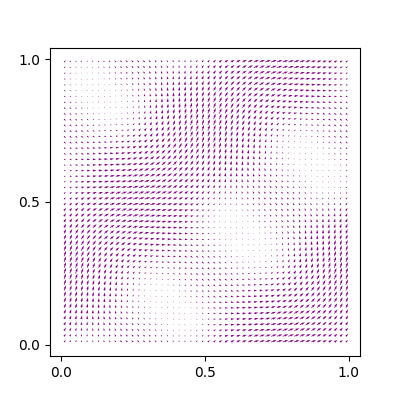}
  \caption{Surface plot of the density (left) and velocity field (right) for the
   		  incompressible data for 
    	$\veps=10^{-4}$ at time $T= 0.0$ (Top) and $T= 3.0$ (Bottom) on a 50 X 50 Mesh.} 
  \label{fig:den_dvg_contf}
\end{figure}
\subsection{Cylindrical Explosion Problem}
\label{sec:cylexp}
We consider a 2-D cylindrical explosion problem motivated by 
\cite{BQR+19, DLV17} for the
Euler equations \eqref{eq:ee_mass_nd}-\eqref{eq:ee_mom_nd} with a linear
relation between the pressure and density $P(\rho) = \rho^2$. 

The computations are carried out on the square $\Omega =
[-1,1]^2$. The initial density profile reads
\begin{equation}\label{eq:cylexp_density}
\rho(0, x_1, x_2) = \begin{cases}
  1 + \veps^2 , &  \text{if} \ r^2 \leq 1/4,\\
  1, & \text{otherwise}. 
\end{cases}
\end{equation}
In \eqref{eq:cylexp_density}, $r = \sqrt{x_1^2 + x_2^2}$ is the distance
of any point $(x_1, x_2)$ from the origin $(0, 0)$. At  time $T = 0$,
the velocity of the fluid is given by 
\begin{equation}
u_1(0, x_1, x_2) = -\frac{\alpha(x_1, x_2)}{\rho(0, x_1, x_2)}
\frac{x_1}{r} ,  \quad
u_2(0, x_1, x_2) = -\frac{\alpha(x_1, x_2)}{\rho(0, x_1, x_2)}
\frac{x_2}{r},
\end{equation}
where the coefficient $\alpha(x_1, x_2)$ is given by $\alpha := \text{max} (0,
1 - r) (1 - e^{-16 r^2})$ and $(u_1, u_2)$ is set to $(0, 0)$, if $r <
10^{-15}$. All the boundaries are assumed to be periodic. The domain is
divided uniformly by a $100\times 100$ mesh. The CFL number is set at 0.45.

Figure~\ref{fig:sod} shows the surface plots of the density, and quiver plots of the velocity field at times $t= 0.1, \ 0.24, \ 0.3$ in a compressible regime, which is classified by the choice of $\veps = 1.0$. The linearly implicit IMEX scheme simulates the circular shock-wave moving outward quite well. In spite of a linearly implicit flux the shock profiles do not have any oscillations, which is a desirable property. This shows that the newly developed scheme is able to produce shock profiles successfully, elucidating its performance in the compressible regime.

We set different values of $\veps \in \{10^{-2}, 10^{-3}, 10^{-4} \}$ in the same problem. The time-steps are chosen as $\Dlt = 0.5 \times dx$, and computations are carried out using the linearly implicit IMEX scheme. The results of the experiment are presented in Figure~\ref{fig:den_quiv_contf}, which shows that the solutions remain close to the incompressible space, i.e. the space of constant densities and divergence free velocities.
\begin{figure}[htbp]
  \centering
  \includegraphics[height=0.16\textheight]{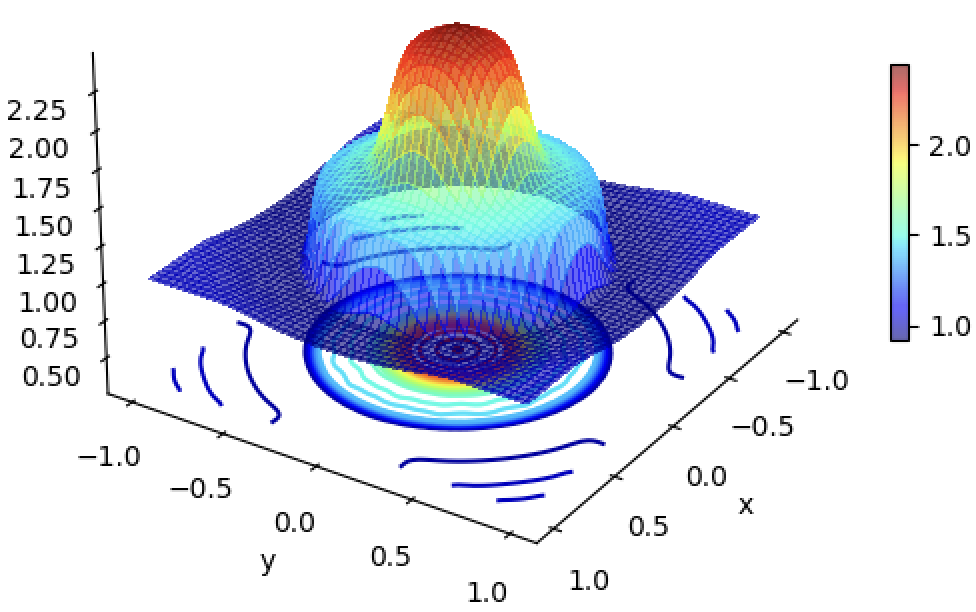}
  \includegraphics[height=0.155\textheight]{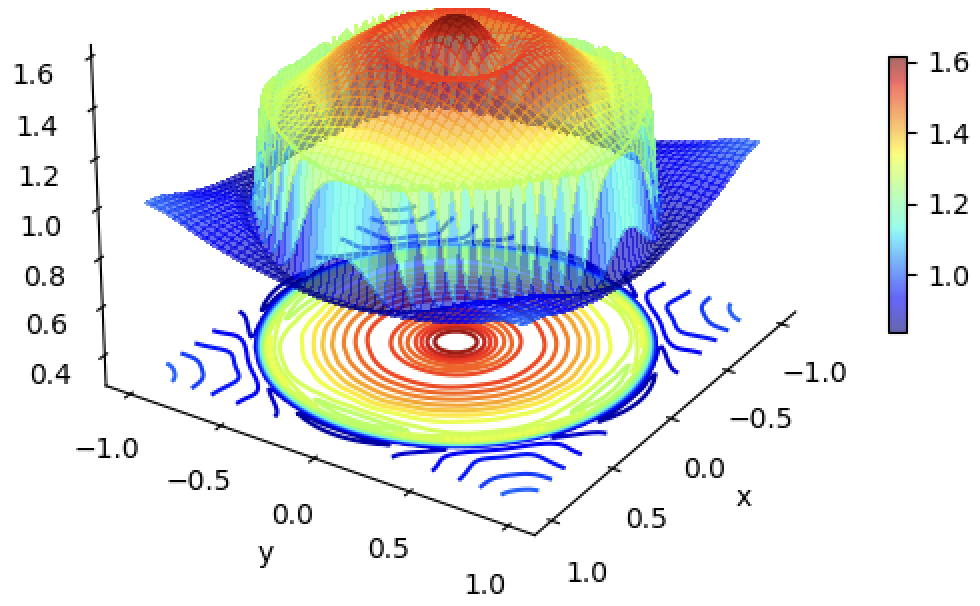} 
  \includegraphics[height=0.16\textheight]{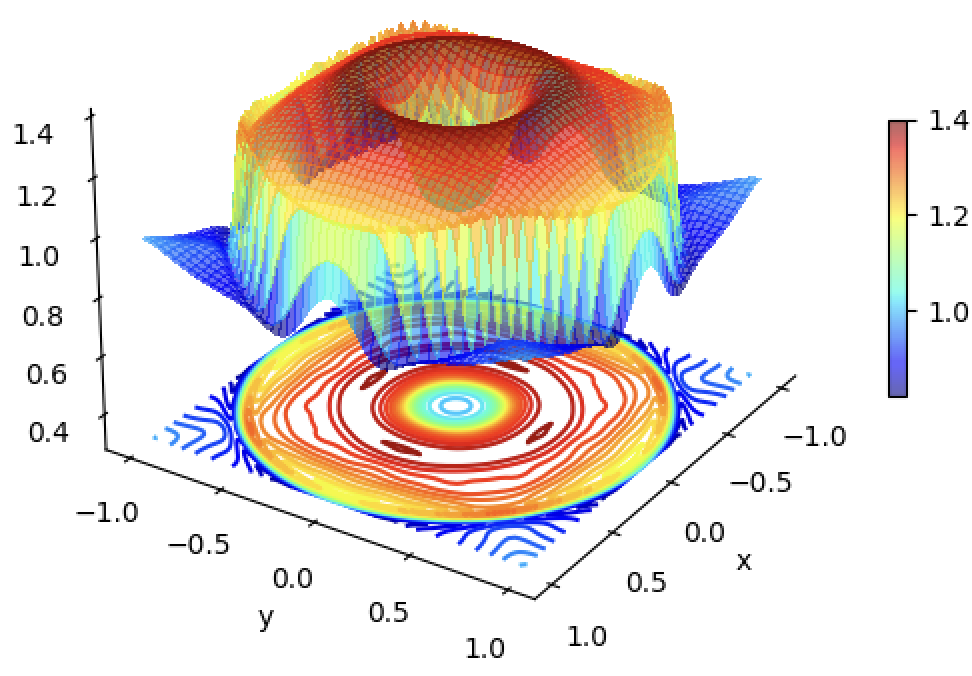} \\
  \includegraphics[height=0.24\textheight]{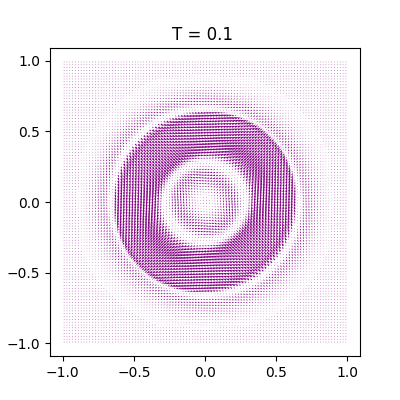}
  \includegraphics[height=0.24\textheight]{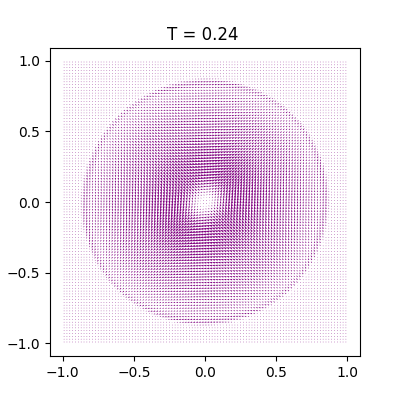} 
  \includegraphics[height=0.24\textheight]{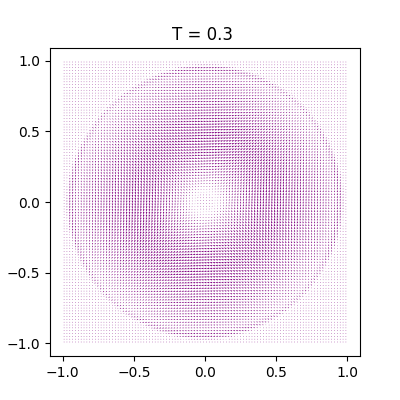} 
  \caption{Surface plots of the density (top panel), and quiver plots
    of the velocity field (bottom panel) for the cylindrical
    explosion problem with $\veps=1$, at times $T= 0.1$ (left), $T =
    0.24$ (middle), and $T = 0.5$ (right).} 
  \label{fig:sod}
\end{figure}
\begin{figure}[htbp]
  \centering
  \includegraphics[height=0.12\textheight]{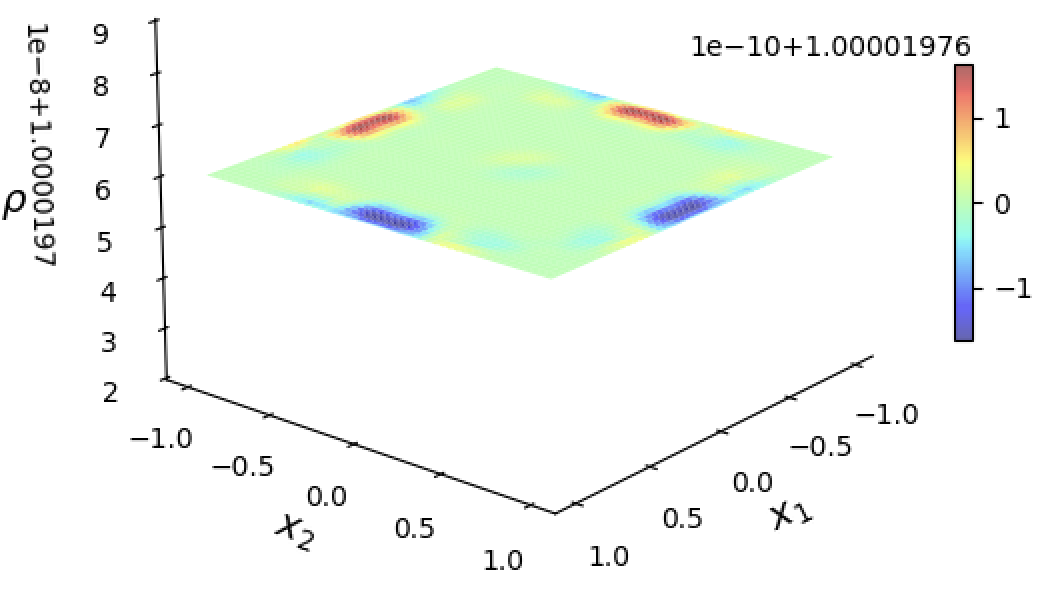}
  \includegraphics[height=0.12\textheight]{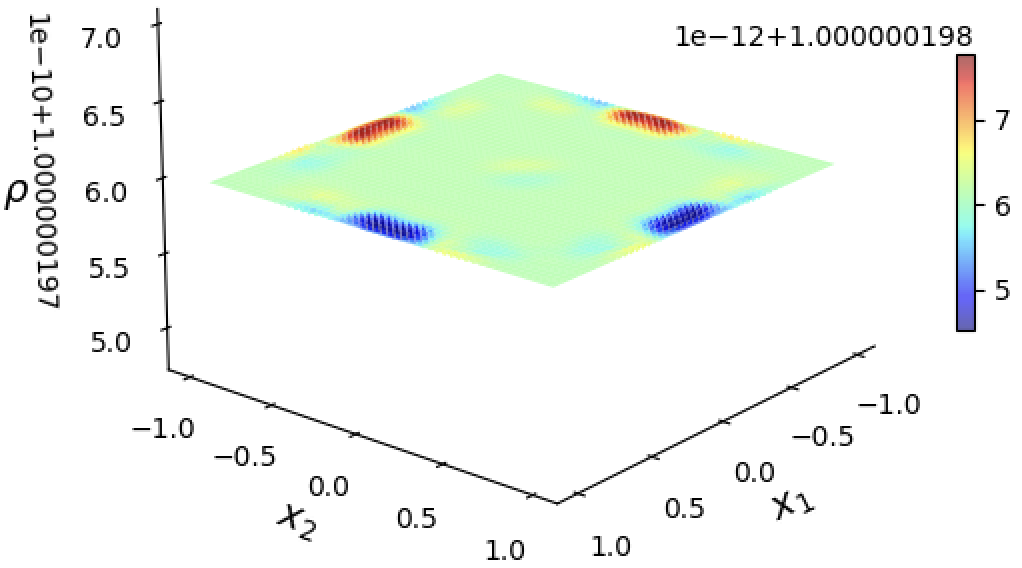}
  \includegraphics[height=0.12\textheight]{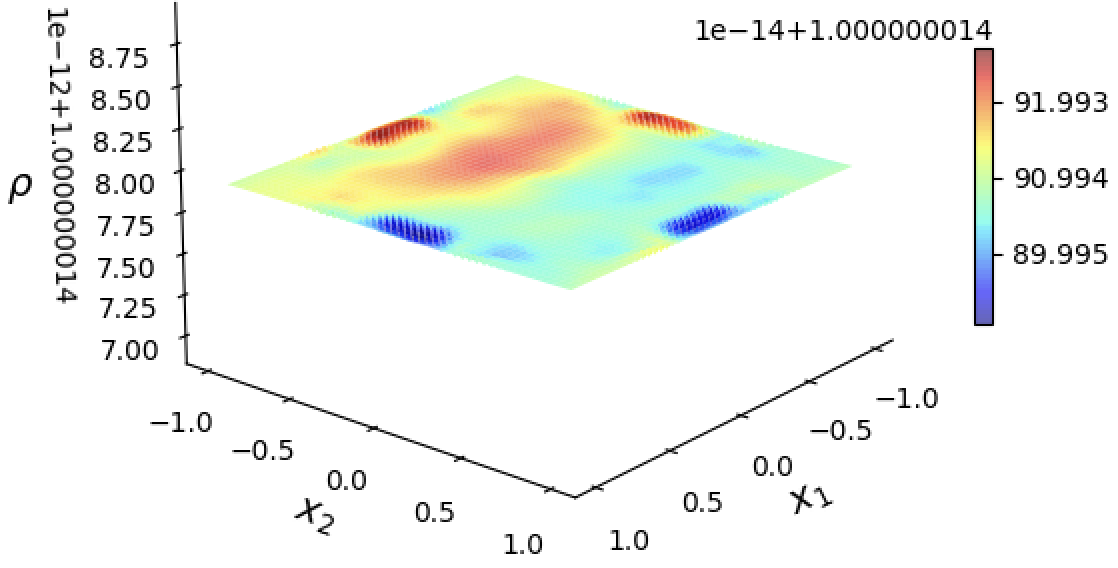} \\
  \includegraphics[height=0.13\textheight]{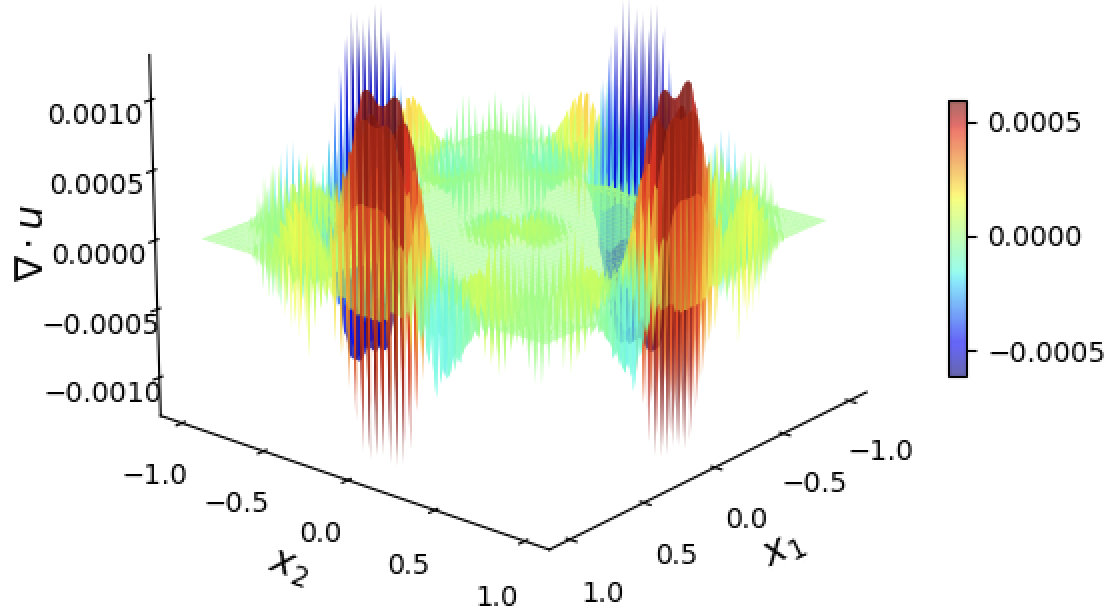}
  \includegraphics[height=0.13\textheight]{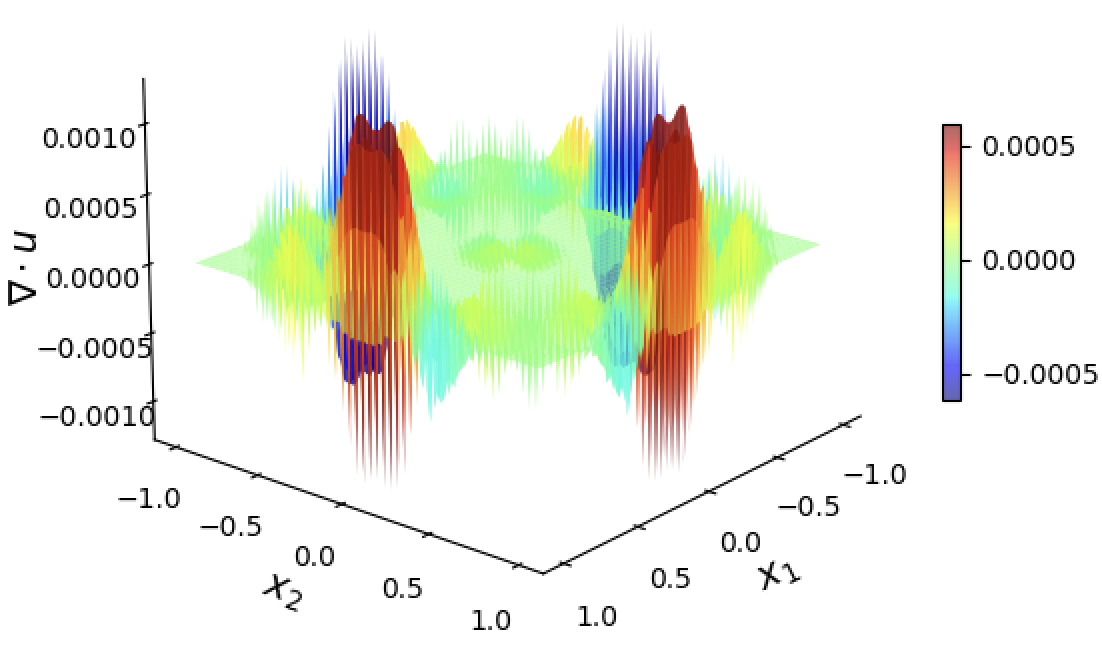}
  \includegraphics[height=0.13\textheight]{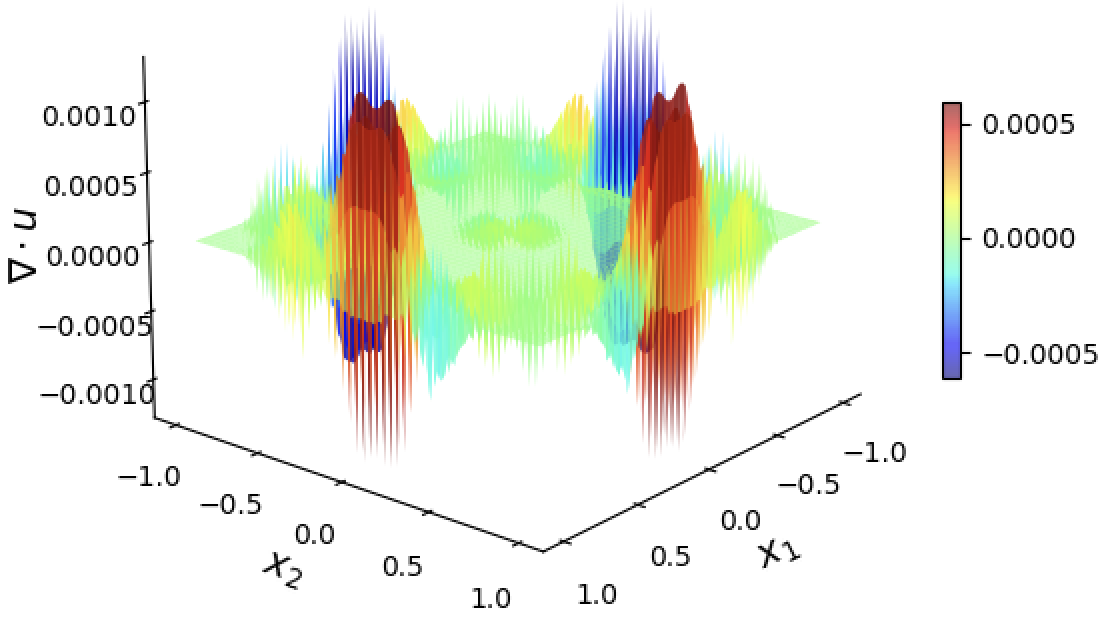} 
  \caption{Surface plot of the density (top panel) and divergence of velocity (bottom panel)
  for the cylindrical explosion problem with
    $\veps=10^{-2}$ (left), $\veps=10^{-3}$ (middle), $\veps=10^{-4}$ (right) at time $T= 1.0$.} 
  \label{fig:den_quiv_contf}
\end{figure}
\section{Conclusions}
\label{sec:conclusions}
In this paper we have presented a novel additive IMEX-RK scheme for the low Mach regime of Euler equations with isentropic pressure law. The scheme hinges on splitting of the momentum flux which is realised by adding and subtracting the constant density incompressibility constraint to the momentum flux. As a result of this new flux splitting the implicitness is only associated with a linear term. The time semi-discrete scheme is shown to be AP. Moreover, the general $s$-stage IMEX scheme is shown to be strongly asymptotically consistent if a type-A Butcher tableaux is chosen. A space time fully discrete finite volume scheme which based on a combination of Rusanov fluxes and central differencing is implemented. The results of the numerical case studies show the uniform accuracy of the newly developed scheme with respect to the Mach number $\veps$. 

This scheme serves as a stepping stone to design linearly implicit additive IMEX-RK schemes for more general EOS, in the low Mach number regime. For general EOS it would require us to understand which all incompressibility constraints are need to be brought to the fore in-order to make the flux-splitting suitable for a linear implicitness.


\bibliographystyle{siamplain}
\bibliography{references}

\begin{thebibliography}{10}

\bibitem{ADS21}
{\sc K.~Arun, A.~{Das Gupta}, and S.~Samantaray}, {\em Analysis of an
  asymptotic preserving low mach number accurate imex-rk scheme for the wave
  equation system}, Applied Mathematics and Computation, 411 (2021), p.~126469,
  \url{https://doi.org/https://doi.org/10.1016/j.amc.2021.126469},
  \url{https://www.sciencedirect.com/science/article/pii/S0096300321005580}.

\bibitem{AS20}
{\sc K.~R. Arun and S.~Samantaray}, {\em Asymptotic preserving low {M}ach
  number accurate {IMEX} finite volume schemes for the isentropic {E}uler
  equations}, J. Sci. Comput., 82 (2020), pp.~Art. 35, 32,
  \url{https://doi.org/10.1007/s10915-020-01138-8},
  \url{https://doi.org/10.1007/s10915-020-01138-8}.

\bibitem{ARS97}
{\sc U.~M. Ascher, S.~J. Ruuth, and R.~J. Spiteri}, {\em Implicit-explicit
  {R}unge-{K}utta methods for time-dependent partial differential equations},
  Appl. Numer. Math., 25 (1997), pp.~151--167,
  \url{https://doi.org/10.1016/S0168-9274(97)00056-1},
  \url{http://dx.doi.org/10.1016/S0168-9274(97)00056-1}.
\newblock Special issue on time integration (Amsterdam, 1996).

\bibitem{BAL+14}
{\sc G.~Bispen, K.~R. Arun, M.~Luk\'{a}\v{c}ov\'{a}-Medvid'ov\'{a}, and
  S.~Noelle}, {\em I{MEX} large time step finite volume methods for low
  {F}roude number shallow water flows}, Commun. Comput. Phys., 16 (2014),
  pp.~307--347, \url{https://doi.org/10.4208/cicp.040413.160114a},
  \url{https://doi.org/10.4208/cicp.040413.160114a}.

\bibitem{BMY17}
{\sc G.~Bispen, M.~Luk\'{a}\v{c}ov\'{a}-Medvi\v{d}ov\'{a}, and L.~Yelash}, {\em
  Asymptotic preserving {IMEX} finite volume schemes for low {M}ach number
  {E}uler equations with gravitation}, J. Comput. Phys., 335 (2017),
  pp.~222--248, \url{https://doi.org/10.1016/j.jcp.2017.01.020},
  \url{https://doi.org/10.1016/j.jcp.2017.01.020}.

\bibitem{BQR+19}
{\sc S.~Boscarino, J.-M. Qiu, G.~Russo, and T.~Xiong}, {\em A high order
  semi-implicit {IMEX} {WENO} scheme for the all-{M}ach isentropic {E}uler
  system}, J. Comput. Phys., 392 (2019), pp.~594--618,
  \url{https://doi.org/10.1016/j.jcp.2019.04.057},
  \url{https://doi.org/10.1016/j.jcp.2019.04.057}.

\bibitem{CDK12}
{\sc F.~Cordier, P.~Degond, and A.~Kumbaro}, {\em An asymptotic-preserving
  all-speed scheme for the {Euler} and {Navier-Stokes} equations}, J. Comput.
  Phys., 231 (2012), pp.~5685--5704,
  \url{https://doi.org/10.1016/j.jcp.2012.04.025},
  \url{http://www.sciencedirect.com/science/article/pii/S0021999112002069}.

\bibitem{Deg13}
{\sc P.~Degond}, {\em Asymptotic-preserving schemes for fluid models of
  plasmas}, in Numerical models for fusion, vol.~39/40 of Panor. Synth\`eses,
  Soc. Math. France, Paris, 2013, pp.~1--90.

\bibitem{DT11}
{\sc P.~Degond and M.~Tang}, {\em All speed scheme for the low {M}ach number
  limit of the isentropic {E}uler equations}, Commun. Comput. Phys., 10 (2011),
  pp.~1--31.

\bibitem{Del10}
{\sc S.~Dellacherie}, {\em Analysis of {G}odunov type schemes applied to the
  compressible {E}uler system at low {M}ach number}, J. Comput. Phys., 229
  (2010), pp.~978--1016, \url{https://doi.org/10.1016/j.jcp.2009.09.044},
  \url{http://dx.doi.org/10.1016/j.jcp.2009.09.044}.

\bibitem{DLV17}
{\sc G.~Dimarco, R.~Loub\`ere, and M.-H. Vignal}, {\em Study of a new
  asymptotic preserving scheme for the {E}uler system in the low {M}ach number
  limit}, SIAM J. Sci. Comput., 39 (2017), pp.~A2099--A2128,
  \url{https://doi.org/10.1137/16M1069274},
  \url{https://doi.org/10.1137/16M1069274}.

\bibitem{DP13}
{\sc G.~Dimarco and L.~Pareschi}, {\em {Asymptotic Preserving Implicit-Explicit
  Runge--Kutta Methods for Nonlinear Kinetic Equations}}, SIAM J. Numer. Anal.,
  51 (2013), pp.~1064--1087.

\bibitem{GV99}
{\sc H.~Guillard and C.~Viozat}, {\em On the behaviour of upwind schemes in the
  low {M}ach number limit}, Comput. \& Fluids, 28 (1999), pp.~63--86,
  \url{https://doi.org/10.1016/S0045-7930(98)00017-6},
  \url{http://dx.doi.org/10.1016/S0045-7930(98)00017-6}.

\bibitem{HW96}
{\sc E.~Hairer and G.~Wanner}, {\em Solving ordinary differential equations.
  {II}}, vol.~14 of Springer Series in Computational Mathematics,
  Springer-Verlag, Berlin, second~ed., 1996,
  \url{https://doi.org/10.1007/978-3-642-05221-7},
  \url{https://doi.org/10.1007/978-3-642-05221-7}.
\newblock Stiff and differential-algebraic problems.

\bibitem{Jin99}
{\sc S.~Jin}, {\em Efficient asymptotic-preserving {(AP)} schemes for some
  multiscale kinetic equations}, SIAM J. Sci. Comput., 21 (1999), pp.~441--454,
  \url{https://doi.org/10.1137/S1064827598334599},
  \url{http://epubs.siam.org/doi/abs/10.1137/S1064827598334599},
  \url{https://arxiv.org/abs/http://epubs.siam.org/doi/pdf/10.1137/S1064827598334599}.

\bibitem{Jin12}
{\sc S.~Jin}, {\em Asymptotic preserving ({AP}) schemes for multiscale kinetic
  and hyperbolic equations: a review}, Riv. Math. Univ. Parma (N.S.), 3 (2012),
  pp.~177--216.

\bibitem{KC03}
{\sc C.~A. Kennedy and M.~H. Carpenter}, {\em Additive {R}unge-{K}utta schemes
  for convection-diffusion-reaction equations}, Appl. Numer. Math., 44 (2003),
  pp.~139--181, \url{https://doi.org/10.1016/S0168-9274(02)00138-1},
  \url{https://doi.org/10.1016/S0168-9274(02)00138-1}.

\bibitem{kle95}
{\sc R.~Klein}, {\em Semi-implicit extension of a {G}odunov-type scheme based
  on low {M}ach number asymptotics. {I}. {O}ne-dimensional flow}, J. Comput.
  Phys., 121 (1995), pp.~213--237,
  \url{https://doi.org/10.1016/S0021-9991(95)90034-9},
  \url{http://dx.doi.org/10.1016/S0021-9991(95)90034-9}.

\bibitem{NBA+14}
{\sc S.~Noelle, G.~Bispen, K.~R. Arun,
  M.~Luk\'{a}\v{c}ov\'{a}-Medvi\v{d}ov\'{a}, and C.-D. Munz}, {\em A weakly
  asymptotic preserving low {M}ach number scheme for the {E}uler equations of
  gas dynamics}, SIAM J. Sci. Comput., 36 (2014), pp.~B989--B1024,
  \url{https://doi.org/10.1137/120895627},
  \url{https://doi.org/10.1137/120895627}.

\bibitem{PR01}
{\sc L.~Pareschi and G.~Russo}, {\em Implicit-explicit {R}unge-{K}utta schemes
  for stiff systems of differential equations}, in Recent trends in numerical
  analysis, vol.~3 of Adv. Theory Comput. Math., Nova Sci. Publ., Huntington,
  NY, 2001, pp.~269--288.

\bibitem{SBG+99}
{\sc T.~Schneider, N.~Botta, K.~J. Geratz, and R.~Klein}, {\em Extension of
  finite volume compressible flow solvers to multi-dimensional, variable
  density zero {M}ach number flows}, J. Comput. Phys., 155 (1999),
  pp.~248--286, \url{https://doi.org/10.1006/jcph.1999.6327},
  \url{http://dx.doi.org/10.1006/jcph.1999.6327}.

\bibitem{ZN18}
{\sc H.~Zakerzadeh and S.~Noelle}, {\em A note on the stability of
  implicit-explicit flux-splittings for stiff systems of hyperbolic
  conservation laws}, Commun. Math. Sci., 16 (2018), pp.~1--15,
  \url{https://doi.org/10.4310/CMS.2018.v16.n1.a1},
  \url{https://doi.org/10.4310/CMS.2018.v16.n1.a1}.

\end{thebibliography}

\end{document}